\DeclareFontFamily{U}{mathx}{\hyphenchar\font45}
\DeclareFontShape{U}{mathx}{m}{n}{<-> mathx10}{}
\DeclareSymbolFont{mathx}{U}{mathx}{m}{n}
\DeclareMathAccent{\widebar}{0}{mathx}{"73}
\newtheorem{theorem}{Theorem}[section]
\newtheorem{remark}[theorem]{Remark}
\newtheorem{lemma}[theorem]{Lemma}
\newtheorem{corollary}[theorem]{Corollary}
\newtheorem{proposition}[theorem]{Proposition}
\newtheorem{question}{Question}
\theoremstyle{definition}
\newtheorem{definition}[theorem]{Definition}
\newtheorem{notation}[theorem]{Notation}
\newtheorem{example}[theorem]{Example}
\newenvironment{sproof}{\paragraph{Proof of Claim}}{\hfill \qedsymbol \text{ {\footnotesize  Claim}}}
\newcommand{\twopartdef}[4]
{
	\left\{
	\begin{array}{ll}
		#1 & \mbox{if } #2 \\
		#3 & \mbox{if } #4
	\end{array}
	\right.
}
\newcommand{\threepartdef}[6]
{
	\left\{
	\begin{array}{ll}
		#1 & \mbox{if } #2 \\
		#3 & \mbox{if } #4 \\
		#5 & \mbox{if } #6
	\end{array}
	\right.
}
\newcommand{\cM}{\mathcal{M}}
\newcommand{\cN}{\mathcal{N}}
\newcommand{\cL}{\mathcal{L}}
\newcommand{\cA}{\mathcal{A}}
\newcommand{\cB}{\mathcal{B}}
\newcommand{\cC}{\mathcal{C}}
\newcommand{\cD}{\mathcal{D}}
\newcommand{\cG}{\mathcal{G}}
\newcommand{\vphi}{\varphi}
\newcommand{\scomp}[2]{#1[#2]}
\newcommand{\gcompMN}{\scomp{\cM}{\cN_a}_{a\in\cM}}
\newcommand{\closure}[1]{\widehat{#1}}
\newcommand{\into}{\hookrightarrow}
\DeclareMathOperator{\aut}{Aut}
\DeclareMathOperator{\Th}{Th}
\DeclareMathOperator{\age}{age}
\title{On products of elementarily indivisible structures}
\author{Nadav Meir}
\address{Department of Mathematics\\ Ben-Gurion University of the Negev\\
	P.O.B 653, Be'er Sheva 8410501, ISRAEL}
\email{mein@math.bgu.ac.il}
\subjclass[2000]{03C10, 03C35, 05D10, 05C55}
\keywords{Indivisibility, Elementary indivisibility, Coloring, Quantifier elimination}
\begin{document}

	\begin{abstract}
		We say a structure $\mathcal{M}$ in a first-order language $\mathcal{L}$ is \emph{indivisible} if for every coloring of its universe in two colors, there is a monochromatic substructure $\mathcal{M}^{\prime} \subseteq \mathcal{M}$ such that $\mathcal{M}^{\prime} \cong \mathcal{M}$. Additionally, we say that $\mathcal{M}$ is \emph{symmetrically indivisible} if $\mathcal{M}^{\prime}$ can be chosen to be \emph{symmetrically embedded} in $\mathcal{M}$ (that is, every automorphism of $\mathcal{M}^{\prime}$ can be extended to an automorphism of $\mathcal{M}$). Similarly, we say that $\mathcal{M}$ is \emph{elementarily indivisible} if $\mathcal{M}^{\prime}$ can be chosen to be an elementary substructure. We define new products of structures in a relational language. We use these products to give  recipes for construction of elementarily indivisible structures which are not transitive and elementarily indivisible structures which are not symmetrically indivisible, answering two questions presented by A. Hasson, M. Kojman and A. Onshuus.
		
	\end{abstract}

	\maketitle 

	\section{Introduction}
	The notion of indivisibility of relational first-order structures and metric spaces is well studied
	in Ramsey theory. (\cite{Metrics07}, \cite{ElSa93}, \cite{ElSa91}, and \cite{KoRo86}  are just a few examples of the extensive study in this area.) Recall that a
	structure $\cM$ in a relational first-order language is indivisible, if for every coloring of its universe in two colors, there is a monochromatic substructure $\cM^{\prime} \subseteq \cM$ such that $\cM^{\prime} \cong \cM$. Rado's random graph, the ordered set of
	natural numbers and the ordered set of rational numbers are just a few of the many
	examples. Weakenings of this notions have also been studied (see \cite{Sa14}). A known extensively studied strengthening of this notion is the pigeonhole property (see \cite{Pig1}, \cite{Pig2}).
	For an extensive survey on indivisibility see \cite[Appendix A]{Fr00}. 
	
	In \cite{GeKo11}, several induced Ramsey theorems for graphs were strengthened to a ``symmetrized'' version, in which the induced monochromatic subgraph
	satisfies that all members of a prescribed set of its partial isomorphisms extend to automorphisms
	of the original graph. In \cite{HKO11}, following \cite{GeKo11}, a new strengthening of the notion of indivisibility was introduced: 
	\begin{definition}
		We say a substructure $\cN \subseteq \cM$ is \emph{symmetrically embedded} in $\cM$ if every automorphism of $\cN$ extends to an automorphism of $\cM$.
		
		We say that $\cM$ is \emph{symmetrically indivisible} if for every coloring of its universe in two colors, there is a monochromatic $\cM^{\prime} \subseteq \cM$ such that $\cM^{\prime}$ is isomorphic to $\cM$ and $\cM^{\prime}$ is symmetrically embedded in $\cM$. 
	\end{definition}
	
	In \cite{HKO11}, several examples of symmetrically indivisible structures were investigated. Examples include the random graph (\cite{GeKo11}), the ordered set of rational numbers, the ordered set natural numbers, and the universal $n$-hypergraph.
	
	In the last section of \cite{HKO11}, another strengthening of the notion of indivisibility was introduced:
	
	\begin{definition}
			we say that $\cM$ is \emph{elementarily indivisible} if for every coloring of its universe in two colors, there is a monochromatic $\cM^{\prime} \subseteq \cM$ such that $\cM^{\prime}$ is isomorphic to $\cM$ and $\cM^{\prime}$ is an elementary substructure of $\cM$.
	\end{definition}
 Classic examples for this notion, as given in \cite{HKO11}, are the random graph, the ordered set of rational numbers, and the \emph{colorful graph} described below.
 
 \begin{example}\label{ex:Colorful}
 		Let $\cC$ be the class of all finite complete graphs with edges colored in $\omega$-many colors, i.e. all finite $\{R_i\}_{i\in \omega}$-structures such that $\{R_i\}_{i\in \omega}$ are disjoint, irreflexive, symmetric binary relations whose union is the complete graph. This is a Fra\"iss\'e class and we call its Fra\"iss\'e limit \emph{the colorful graph}.
 	\end{example}
 
  In Example \ref{ex:EInotQE} we present an example from \cite{HKO11} of an elementarily indivisible structure which does not admit quantifier elimination and is, in fact, not even model complete. Despite this example, in view of Lemma \ref{TMI}, every elementarily indivisible structure can be considered as a reduct of an indivisible structure who admits quantifier elimination.
   
 A classic example of a symmetrically indivisible structure which is not elementarily indivisible is the ordered set of natural numbers, since every singleton is $\emptyset$-definable.
	(In fact, there are no proper elementary substructures of $\langle \omega, <\rangle$.)
	
	In view of the above example, indivisibility should be viewed as a property of  the pair $(\cM, \cL)$ of a structure and the language in which it is given. Elementary indivisibility seems to be the right analogous property of the structure only (i.e., independent of its language). This statement is given a precise meaning in Lemma \ref{TMI}.

	In \cite{HKO11}, The following questions were asked regarding the properties of elementarily indivisible structures, as well as the relation between this notion and the notion of symmetric indivisibility:
	
	\begin{question}\label{q:elemSym}  Does elementary indivisibility imply symmetric indivisibility?
	\end{question}
	\begin{question}\label{q:elemHom} Is every elementarily indivisible structure homogeneous?
	\end{question}
	
	\begin{question}\label{q:elemRigid} Is there a rigid elementarily indivisible structure?
	\end{question}

	In the literature the precise definition of homogeneity tends to vary; for example, in \cite{Mac11}, a structure is said to be homogeneous if it is what we call ultrahomogeneous. Here we follow the conventions of \cite{Hodges93}  and \cite{Marker02}, as presented in Definitions \ref{defHom} and \ref{defUltrahom}.
	
	To quote  \cite{Metrics07} in a similar context, ``The uncountable case is different as the indivisibility property may fail badly". In view of this, since the dawn of mankind (i.e. all the study mentioned above), indivisibility of first-order structures has been mostly studied in the countable context, since in the uncountable case set theoretic phenomena come into play. We note that while all results mentioned in this paper hold under the restriction to countable structures, in fact the countability assumption is superfluous.

	In this paper, we investigate a construction we call the lexicographic product $\cM[\cN]$ of two relational structures $\cM$ and $\cN$, presented in Definition \ref{defDefProduct}. We note this construction is very similar to the ``composition'' defined in \cite{HKO11} and it generalizes the lexicographic order and the lexicographic product of graphs, as known in graph theory. In Section \ref{sectionQE}, We show that if $\cM$ and $\cN$ both admit quantifier elimination and every two singletons in $\cM$ satisfy the same first-order formulas (i.e. the theory of $\cM$ is \emph{transitive} in the sense of Definition \ref{def:transitiveTheory} below), then $\scomp{\cM}{\cN}^s$ admits quantifier elimination as well. We use this result to show that if $\cM$ and $\cN$ are both elementarily indivisible, then so are $\scomp{\cM}{\cN}$ and $\scomp{\cM}{\cN}^s$.

	We further generalize the quantifier elimination result to a generalized product construction we introduce in Definition \ref{defGenProduct}.

	Applying the results mentioned above, in Section \ref{sectionElemNonTrans} we give general constructions of elementarily indivisible structures which are not transitive and in Section \ref{elemNonSym}  of elementarily indivisible structures which are not symmetrically indivisible, answering Questions   \ref{q:elemSym} and \ref{q:elemHom} negatively. Question \ref{q:elemRigid} remains open.

\subsection{Preliminaries}

Unless otherwise specified, we do not distinguish between a structure $\cM$ and its universe (or underlying set). Throughout this paper all languages are relational so there is no distinction between subsets and substructures of a given structure. The notation for both is $B\subseteq \cM$. We denote the cardinality of a structure $\cM$ by $|\cM|$.

\begin{definition}
	If $\cM$ and $\cN$ are $\cL$-structures and $B \subseteq \cM$, we say that $f : B \to \cN$ is a
	\emph{partial elementary} map if
	$\cM \models \vphi\left(\bar{b}\right) \iff \cN\models \vphi\left(f\left(\bar{b}\right)\right)$
	for all $\cL$-formulas $\vphi$ and all finite sequences $\bar{b}$ from B.

	If $B=\cM$ we just say $f$ is an \emph{elementary embedding}.
	
	A substructure $\cM\subseteq \cN$ is an \emph{elementary substructure} if the inclusion map $\iota$ is an elementary embedding, in which case we denote $\cM\preceq \cN$.
\end{definition}

\begin{definition}
	If $\cM$ and $\cN$ are $\cL$-structures and $B \subseteq \cM$, we say that $f : B \to \cN$ is a
	\emph{partial isomorphism} if
	$\cM \models \vphi\left(\bar{b}\right) \iff \cN\models \vphi\left(f\left(\bar{b}\right)\right)$
	for all \emph{quantifier-free} (or equivalently, atomic) $\cL$-formulas $\vphi$ and all finite sequences $\bar{b}$ from B.
	
\end{definition}

\begin{definition}\label{defHom}
	We say a structure $\cM$ is \emph{homogeneous} if whenever $A \subset \cM$ with $|A| < |\cM|$ and $f : A \to \cM$ is a partial
	elementary map, there is an automorphism $\sigma\in \aut(\cM)$ such that $\sigma \upharpoonright A = f$.
\end{definition}

\begin{definition}\label{defUltrahom}
	We say a structure $\cM$ is \emph{ultrahomogeneous} if whenever $A \subset \cM$ with $|A| < |\cM|$ and $f : A \to \cM$ is a partial
	isomorphism, there is an automorphism $\sigma\in \aut(\cM)$ such that $\sigma \upharpoonright A = f$.
\end{definition}

In \cite{HKO11}, a construction very similar to the following was introduced. We note that while our construction is slightly different, in fact, in the context of binary relational languages these two definitions coincide. We further note that the notation below is classic for structures in a binary language (e.g. \cite{Cherlin} and \cite{Lachlan}) and in fact coincides with the lexicographic (partial) order and the lexicographic product of graphs (sometimes referred to as wreath product). Having said that, \cite[Definition 2.1]{HKO11} and  \cite[Definition 2.3]{HKO11} are the earliest occurrences the author was able to find of such a product for languages of arbitrary arity.

\begin{definition}\label{defDefProduct}
	Let $\cM$, $\cN$ be structures in a relational language, $\cL$. Let $M, N$ be their universes, respectively.
	The \emph{lexicographic product} $\cM[ \cN ]$ is the $\cL$-structure whose universe is $M\times N$ where for every
	$n$-ary relation $R\in \cL$ we set
	\begin{align*}
	& R^{\cM [ \cN ]} := \\
	&\Set{\big(\left(a_1,b_1\right),\dots,\left(a_n,b_n\right)\big)\ |\ \begin{matrix} \bigwedge_{1\leq i,j\leq n} a_i=a_j \text{\ \ \  and \ \ \  }  \cN \models R\left(b_1,\dots, b_n\right) \end{matrix} } \ \  \cup \\ 
	& \Set{\big(\left(a_1,b_1\right),\dots,\left(a_n,b_n\right)\big)\ |\ \begin{matrix} \bigvee_{1\leq i\neq j\leq n} a_i\neq a_j \text{\ \ \ \  and \ \ \ \ } \cM \models R\left(a_1,\dots, a_n\right) \end{matrix} }.
	\end{align*}

	Let $\scomp{\cM}{\cN}^{s}$ be $\scomp{\cM}{\cN}$
	expanded by a binary relation $s\notin \cL$ interpreted as 
	$$\Set{\big(\left(a_1,b_1\right),\left(a_2,b_2\right)\big)\in (M\times N)^2 | a_1 = a_2}.$$
	
\end{definition}

For the purposes of this paper, we generalize the definition above to the following.

\begin{definition}\label{defGenProduct}
	Let $\cM$, $\{\cN_a\}_{a\in \cM}$ be structures in a relational language, $\cL$. Let $M, \{\cN_a\}_{a\in \cM}$ be their universes, respectively.
	The \emph{generalized product} $\gcompMN$ is the $\cL$-structure whose universe is $\bigcup_{a\in \cM} \{a\}\times N_a$ where for every
	$n$-ary relation $R\in \cL$ we set
	\begin{align*}
	& R^{\gcompMN} := \\  
	& \Set{\big((a,b_1),\dots,(a,b_n)\big)\ |\  a\in M \text{\ \ \ and\ \ \ } \cN_a \models R(b_1,\dots, b_n)  } \ \  \cup \\ 
	& \Set{\big((a_1,b_1),\dots,(a_n,b_n)\big)\ |\ \begin{matrix} \bigvee_{1\leq i\neq j\leq n} a_i\neq a_j \text{\ \ \ \  and \ \ \ \ } \cM \models R(a_1,\dots, a_n) \end{matrix} }.
	\end{align*}

	Let $\gcompMN^s$ be $\gcompMN$
	expanded by a binary relation $s\notin \cL$ interpreted as 
	$$\Set{\big((a,b_1),(a,b_2)\big) | a\in \cM \text{\ \ and\ \ } b_1,b_2\in \cN_a}.$$
\end{definition}

Note that if there is a fixed $\cN$ such that  $\cN_a  = \cN$ for all $a\in \cM$, then this definition coincides with $\scomp{\cM}{\cN}$ and $\scomp{\cM}{\cN}^s$.

\begin{remark}\label{unary}
	Notice that the interpretation of unary predicates in the product does not depend on their interpretation in $\cM$, i.e. for a unary predicate $U\in \cL$, \[\gcompMN \models U((a,b)) \iff \cN_a \models U(b). \]
\end{remark}

\begin{remark}
	Notice that if  $\cM$, $\{\cN_a\}_{a\in M}$ are structures in a relational language $\cL$ and $a\in \cM$, then the substructure  $\{a\}\times \cN_a$ is isomorphic to $\cN_a$.
\end{remark}

\begin{definition}\label{def:transitiveTheory}
	We say a theory $T$ is \emph{transitive} if for every $\phi(x)$ in one free variable, either $T \models \forall x\, \phi(x)$
	or $T \models \forall x\, \neg \phi(x)$ (i.e. $T$ has a unique $1$-type).
\end{definition}

\begin{definition}\label{def:transitiveStructure}
	We say an $\cL$-structure $\cM$ is \emph{transitive} if for every $x, y\in \cM$, there is an automorphism $\sigma\in \aut(\cM)$ such that $\sigma(x)=y$.

\end{definition}
\begin{lemma}\label{elemTrans}
	$\Th(\cM)$ is transitive for every elementarily indivisible $\cL$-structure $\cM$.
\end{lemma}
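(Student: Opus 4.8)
The plan is to prove the contrapositive: assuming $\Th(\cM)$ is not transitive, I will exhibit a $2$-colouring of $M$ admitting no monochromatic elementary copy of $\cM$, so that $\cM$ is not elementarily indivisible. First I would unwind Definition \ref{def:transitiveTheory}: if $\Th(\cM)$ is not transitive, there is a formula $\phi(x)$ in one free variable such that neither $\Th(\cM) \models \forall x\, \phi(x)$ nor $\Th(\cM) \models \forall x\, \neg\phi(x)$. Setting $\phi(\cM) := \{a \in M : \cM \models \phi(a)\}$, this says precisely that both $\phi(\cM)$ and its complement $M \setminus \phi(\cM)$ are non-empty.

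Next I would colour $M$ in two colours, one class being $\phi(\cM)$ and the other its complement, and check this colouring has no monochromatic elementary copy of $\cM$. Suppose for contradiction that $\cM' \preceq \cM$ is monochromatic with $\cM' \cong \cM$, and let $M'$ be its universe. Since $\cM' \preceq \cM$, for every $a \in M'$ we have $\cM' \models \phi(a) \iff \cM \models \phi(a)$. Hence if $M' \subseteq \phi(\cM)$ then $\cM' \models \forall x\, \phi(x)$, while if $M'$ is contained in the complement then $\cM' \models \forall x\, \neg\phi(x)$. Transporting this along the isomorphism $\cM' \cong \cM$, we obtain $\cM \models \forall x\, \phi(x)$ or $\cM \models \forall x\, \neg\phi(x)$, contradicting the choice of $\phi$. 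So the colouring witnesses that $\cM$ is not elementarily indivisible, completing the contrapositive.

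The argument is short, and the only step needing a little care is the passage from ``$M'$ is monochromatic'' to ``$\cM'$ satisfies $\forall x\, \phi(x)$ (or its negation)'': monochromaticity only records the satisfaction of $\phi$ in $\cM$, so it is exactly here that elementariness of $\cM' \preceq \cM$ is invoked to push that information into $\cM'$, and then $\cM' \cong \cM$ pushes it back into $\cM$. Equivalently, one may phrase everything type-theoretically: the two colour classes are the non-empty sets cut out by $\phi$, and an elementary copy of $\cM$ meeting only one of them would force $|S_1(\Th(\cM))| = 1$ already, i.e. would make the colouring trivial in the first place. I do not expect any substantive obstacle beyond bookkeeping.
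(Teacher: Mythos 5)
Your proposal is correct and follows essentially the same route as the paper: colour $M$ by $\phi$ versus $\neg\phi$ (both classes non-empty since $\Th(\cM)$ is not transitive) and observe that no monochromatic elementary copy can exist. The paper leaves the final verification as ``clear''; your unwinding of it via $\cM'\preceq\cM$ and $\cM'\cong\cM$ is exactly the intended argument (indeed elementariness of $\cM'\preceq\cM$ alone already suffices, without invoking the isomorphism).
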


\begin{proof}
	If $\Th(\cM)$ is not transitive, then there is an $\cL$-formula in one free variable $\phi(x)$ such that $\Th(\cM)\not \models \forall x\, \phi(x)$ and $\Th(\cM)\not \models \forall x\, \neg\phi(x)$.
	By completeness, $\Th(\cM)\models \exists x\, \neg\phi(x)$ and $\Th(\cM)\models \exists x\, \phi(x)$. Define a coloring $c:M\to\{\text{red},\text{blue}\}$ as follows:
	\[ c(x):=\twopartdef{\text{blue}}{\cM\models \phi(x)}{\text{red}}{\cM\models \neg\phi(x).} \]
	It is clear that no $c$-monochromatic substructure is elementary.
\end{proof}

Note that obviously if $\cM$ is a transitive structure, then $\Th(\cM)$ is transitive, but the converse is not necessarily true -- in fact, in Section \ref{sectionElemNonTrans} we will see examples of elementarily indivisible structures which are not transitive.
Having said that, we do have:

\begin{remark}
	If $\cM$ is homogeneous, then $\cM$ is transitive iff $\Th(\cM)$ is transitive.
\end{remark}

\begin{corollary}\label{ElemHomIsTrans}
	Every homogeneous elementarily indivisible structure is transitive. \qed
\end{corollary}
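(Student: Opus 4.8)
The plan is simply to chain together the two facts established immediately above. Since $\cM$ is elementarily indivisible, Lemma \ref{elemTrans} gives that $\Th(\cM)$ is transitive. Since $\cM$ is moreover assumed homogeneous, the preceding Remark then yields at once that $\cM$ is transitive, which is exactly the assertion.

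For completeness I would unpack the (essentially one-line) argument behind that Remark in this setting. Fix arbitrary $a,b\in M$; the goal is to produce $\sigma\in\aut(\cM)$ with $\sigma(a)=b$. Transitivity of $\Th(\cM)$ says $|S_1(\Th(\cM))|=1$, so $a$ and $b$ satisfy precisely the same $\cL$-formulas in one free variable; equivalently, the map $f\colon\{a\}\to M$ defined by $f(a)=b$ is partial elementary. If $|M|=1$ there is nothing to prove, and otherwise $|\{a\}|=1<|M|$, so homogeneity of $\cM$ (Definition \ref{defHom}) applies to $f$ and extends it to some $\sigma\in\aut(\cM)$ with $\sigma(a)=b$. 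As $a,b$ were arbitrary, $\cM$ is transitive.

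There is no real obstacle here: all the content lives in Lemma \ref{elemTrans} and in the definition of homogeneity, and the corollary is a formal consequence of the two. The only point needing even a moment's care is the cardinality hypothesis $|A|<|M|$ built into Definition \ref{defHom}, which is why the degenerate one-element universe is dispatched separately before invoking homogeneity; every other step is immediate.
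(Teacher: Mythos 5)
Your proof is correct and follows exactly the paper's intended route: the corollary is just Lemma~\ref{elemTrans} combined with the preceding Remark (which is why the paper marks it \qed{} with no further argument). Your extra unpacking of the Remark, including the check that $f\colon\{a\}\to M$ is partial elementary and the treatment of the $|A|<|M|$ clause, is accurate but adds nothing beyond what the paper takes as immediate.
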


\section{Elimination of quantifiers}\label{sectionQE}

In this paper  we stick to the definition of quantifier elimination presented in \cite{Marker02}:
\begin{definition}
	We say that an $\cL$-theory $T$ admits \emph{quantifier elimination} (QE) if for every $\cL$-formula $\phi$ there is a quantifier-free $\cL$-formula $\psi$ such that
	\[ T\models \phi \leftrightarrow \psi .\] We say an $\cL$-structure  $\cM$ admits QE if $\Th(\cM)$ admits QE.
\end{definition}

\begin{remark}\label{QEmodelComplete}
	If $T$ admits QE and $\cN,\cM\models T$, $\cN\subseteq \cM$ then $\cN\preceq\cM$. So if $\cM$ is indivisible and admits QE, then it is elementarily indivisible.
\end{remark}
Furthermore: this remark can be extended to any infinitary logic. For simplicity, we restrict ourselves to $\cL_{\omega_1,\omega}$:

\begin{definition}
	We say that an $\cL_{\omega_1,\omega}$-theory $T$ admits \emph{quantifier elimination} (QE) if for every $\cL_{\omega_1,\omega}$-formula $\phi$ there is a quantifier-free $\cL_{\omega_1,\omega}$-formula $\psi$ such that
	\[ T\models \phi \leftrightarrow \psi . \] We say an $\cL$-structure  $\cM$ admits $\cL_{\omega_1,\omega}$-QE if its $\cL_{\omega_1,\omega}$-theory admits QE.
\end{definition}

\begin{remark}\label{indivisibleHom}
	It is an easy exercise to verify that for every countable structure $\cM$ in a countable relational language, $\cM$ is ultrahomogeneous iff $\cM$ admits $\cL_{\omega_1,\omega}$-QE, which, in turn, implies that every embedding is elementary. So we have that every indivisible ultrahomogeneous countable structure is elementarily indivisible.
\end{remark}

In view of the remarks above, we present below an example from \cite[Corollary 6.6]{HKO11} of an elementarily indivisible structure which does not admit QE and is not ultrahomogeneous, in a finite language.

\begin{example}\label{ex:EInotQE}
	Let $\cG$ be the colorful graph described in Example \ref{ex:Colorful}. Let $\chi:[\cG]^2\to \omega$ be the function taking each edge $\{a,b\}\in [\cG]^2$ to its color. Let $\cG^{<}$ be $\cG$ expanded by a quaternary relation symbol $R$ such that $\cG^<\models R(a,b,c,d)$ iff $\chi(a,b)<\chi(c,d)$. Notice that $R$ is definable in $\cL_{\omega_1,\omega}$ by the formula $\bigvee_{i<j\in \omega}R_i(a,b)\land R_j(c,d)$. Since $\cG$ is indivisible, so is $\cG^<$. By Remark \ref{indivisibleHom}, since $\cG$ is ultrahomogeneous, $\cG^<$ is also ultrahomogeneous and thus elementarily indivisible. Now $\cG^< \upharpoonright \{R\}$ is also elementarily indivisible and $\cG^< \upharpoonright \{R\}$ does not eliminate quantifiers; for example $\exists z,w R(z,w,x,y)$ cannot be eliminated.

\end{example}

\begin{notation}
For a set of $\cL$-structures $S$ and an $\cL$-theory $T$ we denote $S\models T$ if $\cM\models T$ for all $\cM\in S$.
\end{notation}

\subsection{The elimination}\label{theElimination}

In this subsection we prove the following theorem which is the main result of the section.

\begin{theorem}\label{compositionQE}
	Let $\cL$ be a relational language and let $T_1, T_2$ be $\cL$-theories, not necessarily complete.
	If  $T_1$ and $T_2$ both admit QE and $T_1$ is transitive then
	there is an $\cL \cup \{s\}$-theory $T$ (not necessarily complete) admitting QE, such that $\gcompMN^s \models T$ whenever $\cM \models T_1$ and $\{\cN_a\}_{a\in \cM} \models T_2$. 
	\bigskip
	
	In particular, if $\cM$ and $\cN$ are $\cL$-structures both admitting QE and $\Th(\cM)$ is transitive
	then $\cM[\cN]^s$ admits QE.
\end{theorem}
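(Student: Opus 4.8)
The plan is to prove quantifier elimination for $T$ by the standard back-and-forth/substitution criterion: it suffices to show that any formula of the form $\exists v_{n+1}\, \psi(\bar v, v_{n+1})$, with $\psi$ quantifier-free, is $T$-equivalent to a quantifier-free formula. Since in a relational language a quantifier-free formula is a Boolean combination of atomic formulas, and since $\exists$ distributes over disjunction, I would first put $\psi$ into disjunctive normal form and reduce to the case where $\psi$ is a conjunction of literals. The atomic formulas in the language $\cL\cup\{s\}$ on tuples from $\gcompMN^s$ are of three kinds: equalities $(a_i,b_i)=(a_j,b_j)$, the new relation $s\big((a_i,b_i),(a_j,b_j)\big)$ (which says $a_i=a_j$), and relations $R\big(\widebar{(a,b)}\big)$ for $R\in\cL$. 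The key structural fact, read off from Definition \ref{defGenProduct}, is that whether $R\big((a_{i_1},b_{i_1}),\dots,(a_{i_k},b_{i_k})\big)$ holds is governed entirely by the pattern of equalities among the $a$'s together with $\cM\models R(\bar a)$ and the $\cN_a\models R(\bar b)$'s; so once we fix (via the $s$-literals and $\cL$-equality literals) which coordinates have equal first component, every $\cL$-atom on the product decomposes into an $\cL$-atom ``on the $\cM$-side'' and an $\cL$-atom ``on the $\cN$-side''.

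The heart of the argument is then the elimination of the existential quantifier over $v_{n+1}=(a_{n+1},b_{n+1})$. After the DNF reduction, the conjunction of literals splits into two cases according to whether we are looking for a witness $(a_{n+1},b_{n+1})$ with $a_{n+1}$ equal to one of $a_1,\dots,a_n$ (say $a_{n+1}=a_i$, i.e. $s\big(v_{n+1},v_i\big)$ holds) or with $a_{n+1}$ distinct from all of them. In the first case, the $\cL$-atoms involving $v_{n+1}$ that are ``same-block'' atoms become atoms in $\cN_{a_i}$ on the tuple $\bar b$ extended by $b_{n+1}$, and the ``cross-block'' atoms become atoms in $\cM$ on $\bar a$ extended by $a_i$ — but the latter impose no new constraint on $b_{n+1}$, so the existential over $b_{n+1}$ becomes an existential over a single variable in a model of $T_2$, which by QE for $T_2$ is equivalent to a quantifier-free $\cL$-formula on $\bar b$ restricted to the relevant coordinates; I would rewrite that back as a quantifier-free formula on $\bar v$ using $s$ and the equalities. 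In the second case, the same-block atoms on $v_{n+1}$ are vacuous (the block of $a_{n+1}$ is a singleton among our named points), and the relevant constraints are: $a_{n+1}\neq a_j$ for all $j$, together with the $\cM$-atoms $R(\bar a, a_{n+1})$; here I use transitivity of $T_1$ — it guarantees $T_2$-independent truth of the "one free variable" part and, crucially, that the isomorphism type of a fresh singleton over $\bar a$ in a model of $T_1$ is determined, so $\exists a_{n+1}$ of this is $T_1$-equivalent to a quantifier-free formula on $\bar a$ (using QE for $T_1$), provided the $\cN_{a_{n+1}}$-side can always be satisfied — which it can, since we may pick $b_{n+1}$ freely in a nonempty $\cN_{a_{n+1}}$ (nonemptiness of structures being assumed). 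One must also handle the possibility that no fresh $a_{n+1}$ exists because $\cM$ is finite; this is where listing the axioms of $T$ carefully matters, and $T$ should include, for each $n$, the sentence asserting "$\cM$ has more than $n$ blocks" exactly when $T_1$ proves $\cM$ has more than $n$ elements, so that the reduction is uniform.

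Having handled a single existential, I would then define $T$ to be the $\cL\cup\{s\}$-theory axiomatized by: (i) the axioms saying $s$ is an equivalence relation; (ii) for each $\cL$-atom, the sentence expressing its product-definition in terms of $s$, the $\cM$-side and the $\cN$-side — more precisely, sentences saying that within an $s$-class the $\cL$-structure is a model of $T_2$, and that the induced structure on $s$-classes is a model of $T_1$ (this can be said without quantifying over classes by relativizing and using transitivity of $T_1$ to make the "within a class" description language-free on the $\cM$-side); (iii) the cardinality-of-blocks sentences forced by $T_1$. Then one checks $\gcompMN^s\models T$ whenever $\cM\models T_1$, $\cN_a\models T_2$, which is immediate from the definitions and Remark \ref{unary}, and one checks that $T$ has QE by the single-quantifier criterion above, iterating. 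The particular case — $\cM,\cN$ complete structures with $\Th(\cM)$ transitive — follows by taking $T_1=\Th(\cM)$, $T_2=\Th(\cN)$, noting $\cM[\cN]^s\models T$ and $T$ has QE, hence $\Th(\cM[\cN]^s)\supseteq T$ has QE as well.

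I expect the main obstacle to be bookkeeping rather than conceptual: correctly tracking, through the DNF and the case split on the $s$-type of the new variable, how an $\cL$-atom on the product translates into a pair of $\cL$-atoms (one for $\cM$, one for $\cN$) on the appropriate sub-tuples, and making sure the formula produced after applying QE for $T_1$ and $T_2$ is genuinely quantifier-free in $\cL\cup\{s\}$ and does not secretly reintroduce a quantifier. The subtler point is the use of transitivity of $T_1$: it is needed precisely to ensure that, on the $\cM$-side, adding a fresh point $a_{n+1}$ distinct from $a_1,\dots,a_n$ can be described quantifier-freely — without transitivity, the "1-type over $\bar a$" of a fresh point need not be determined by quantifier-free data, and the elimination in the second case would fail. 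Pinning down exactly where transitivity enters, and verifying it is the only place it is needed, will be the most delicate part of writing the proof cleanly.
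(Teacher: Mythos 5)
Your skeleton matches the paper's proof (reduce to one existential over a conjunction of literals, split according to the $s$-relation of the quantified variable to the parameters, send the same-block part to $T_2$ and the cross-block part to $T_1$, apply QE of each, and let $T$ be generated by the resulting equivalences together with ``$s$ is an equivalence relation''), but there is a genuine gap, and it sits exactly at the point the theorem is about. You invoke transitivity of $T_1$ in the fresh-block case to ``determine the isomorphism type of a fresh singleton over $\bar a$''; that is both unnecessary and not what transitivity gives (it only says $|S_1(T_1)|=1$, i.e.\ a unique $1$-type over $\emptyset$, and in any case QE of $T_1$ alone eliminates the existential $\exists a_{n+1}$ on the $\cM$-side). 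The place transitivity is really needed is the back-translation step you pass over in one clause: after QE of $T_1$ yields a quantifier-free $\cL$-formula $\vphi_1(\bar a)$ about the first coordinates, one must re-express ``$\cM\models\vphi_1(\bar a)$'' as a quantifier-free $\cL\cup\{s\}$-condition on the product tuple $\widebar{(a,b)}$. Atoms of $\vphi_1$ whose arguments lie in pairwise distinct blocks transfer verbatim (this is Lemma \ref{MNiffM}), but $\vphi_1$ may contain diagonal atoms $R(x,\dots,x)$ --- in particular unary predicates --- and by Remark \ref{unary} these are evaluated in the product inside the fibers $\cN_a$, not in $\cM$. Without transitivity this obstruction is fatal: the paper's example with the unary predicates $A,B$ is precisely a case where $\exists y\,R(x,y)$ is $T_1$-equivalent to the quantifier-free $A(x)$, yet $A((a,b))$ in the product says nothing about $a$. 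The paper isolates the repair in Lemma \ref{MNiffMextended}: modulo a transitive $T_1$, every $R(x,\dots,x)$ is equivalent to $x=x$ or $x\neq x$, so such atoms can be purged from $\vphi_1$ before translating, and this is the only use of transitivity. Your write-up never confronts this, so the formula you produce at the end of the elimination is either not a formula of $\cL\cup\{s\}$ about the product elements or has the wrong meaning there.

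A second, smaller error: in the fresh case you declare the same-block literals involving the new variable ``vacuous'' and reduce the fiber side to nonemptiness. That is false when the matrix contains atoms mentioning only the quantified variable, e.g.\ $\exists w\,(\neg s(w,v_1)\land U(w))$ with $U$ unary, or $R(w,\dots,w)$: these are genuine constraints on the fiber $\cN_{a_{n+1}}$ and must be processed through QE of $T_2$, which is exactly how the paper handles them (its set $I_2$ consists of the literals all of whose variables are $s$-tied to $w$, which in the fresh case are the literals mentioning only $w$). Finally, your proposed extra axioms for $T$ (block-cardinality sentences, relativized $T_2$, an ``induced structure on classes models $T_1$'' scheme) are unnecessary and the last of them is delicate to write down for the same diagonal-atom reason; since $T$ need not be complete nor axiomatize the class of products, the paper simply takes $T$ to be the deductive closure of $T_{equiv}$ together with the equivalences $\phi\leftrightarrow\vphi_\phi$ produced by the elimination, which already yields QE and holds in every $\gcompMN^s$.
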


Before proving this theorem, we note that the requirement of transitivity is necessary and provide a simple example in which $\cM$ and $\cN$ both admit QE, but 
$\scomp{\cM}{\cN}^s$ does not:

\begin{example}
	Let  $\cL:=\Set{R,A,B}$ where $R$ is a binary relation and $A$, $B$ are unary predicates. Let $\cM$ be an $\cL$-structure satisfying:
	\begin{itemize}
		\item $\left| A^{\cM} \right| = 1$, $\left| B^{\cM} \right| = \aleph_0$.
		\item $A^{\cM}\cap B^{\cM} = \emptyset$.
		\item $R^{\cM}:=\Set{(a,b)| a\in A^\cM,\ b\in B^\cM}$.
	\end{itemize}
	Let $\cN$ be an $\cL$-structure with a countably infinite universe interpreting all relations in $\cL$ as empty. Then $\cM$ and $\cN$ both admit QE but $\scomp{\cM}{\cN}^s$ does not admit QE.
	Obviously $\cN$ admits QE. 
	
	It is also obvious that $\cM \upharpoonright \{A,B\}$ admits QE (where $\cM \upharpoonright \{A,B\}$ is the restriction of $\cM$ to the language $\{A,B\}$). To show $\cM$ admits QE, we note that $R^{\cM}$ is quantifier free $\emptyset$-definable from $\{A^{\cM}, B^\cM\}$.
	
	To show $\scomp{\cM}{\cN}^s$ does not admit QE, by Remark \ref{unary}, $\scomp{\cM}{\cN}^s \models U((x,y)) \iff \cN\models U(y)$ for every unary predicate $U\in \cL$. Since $\cN$ interprets all relations in $\cL$ as empty, $\cM[\cN]^s$ interprets all unary predicates as empty. Thus every quantifier-free formula in one variable is equivalent to either ``$x=x$'' or ``$x\neq x$''. Let $\phi(x):=\exists y\, R(x,y)$. Notice that $\cM[\cN]^s\models \phi((a,c))$ for $a\in A^{\cM}$ and  $\cM[\cN]^s\not\models\phi((b,c))$ for $b\in B^{\cM}$. So $\phi(x)$ is neither equivalent to ``$x=x$'' nor to ``$x\neq x$'' and thus $\cM[\cN]^s$ does not admit QE.
	
\end{example}

We continue with a few definitions and lemmas needed for the proof of Theorem \ref{compositionQE}.

Throughout this section, we use the following abbreviations:
\begin{notation}\ 
	\begin{itemize}
		\item $\bar{v}:= \left(v_1,\dots, v_n\right)$ is an $n$-tuple of variables.
		\item $\widebar{(a,b)}:=\big(a_1,b_1),\dots, (a_n, b_n)\big)$ is an $n$-tuple of elements in the product (generalized or not).
		\item Whenever $\widebar{(a,b)}=\big((a_1,b_1),\dots, (a_n, b_n)\big)$, we denote $\bar{a}:=(a_1,\dots, a_n)$ and $\bar{b}:=(b_1,\dots, b_n)$.
	\end{itemize}
\end{notation}
So whenever $\bar{v}$ and $\widebar{(a,b)}$ appear together, they are of the same length and, unless otherwise specified, we denote their length by $n$.

\begin{notation} Let $\phi$ be an $\cL$-formula. We denote $\widetilde{\phi}$ the $\cL\cup \{s\}$-formula obtained from $\phi$ by replacing the equality symbol `$=$' with $s$, namely:
	\begin{itemize}
		\item If $\phi$ is atomic of the form $R\left(\bar{v}\right)$ for $R\in\cL$, then $\widetilde{\phi} := \phi$.
		\item If $\phi$ is atomic of the form ``$x=y$'', then $\widetilde{\phi} := s(x,y)$.
		\item If $\phi$ is of the form $\alpha \ast \beta$ where $\ast \in \Set{\land, \lor, \rightarrow}$,  then $\widetilde{\phi}:= \widetilde{\alpha} \ast \widetilde{\beta}$.
		\item If $\phi$ is of the form $\neg \beta$,  then $\widetilde{\phi}:= \neg \widetilde{\beta}$.
		\item If $\phi$ is of the form $\ast x\, \beta$ where $\ast \in \Set{\forall, \exists}$  then $\widetilde{\phi}:= \ast x\, \widetilde{\beta}$.
	\end{itemize}
	
\end{notation}

\begin{lemma}\label{MNiffN} Let $\cM, \{\cN_a\}_{a\in \cM}$ be $\cL$-structures.
	If $\phi\left(\bar{v}\right)$ is a quantifier-free $\cL$-formula, $a\in \cM,\ b_1,\dots, b_n\in \cN_a$, then
	\[\cM[\cN_a]_{a\in \cM} \models \phi\left((a,b_1),\dots,(a,b_n)\right) \iff \cN_a\models\phi(b_1,\dots,b_n).\]
\end{lemma}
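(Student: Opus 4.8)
The plan is to argue by induction on the structure of the quantifier-free $\cL$-formula $\phi$. Since $\phi$ has no quantifiers, only the atomic base case and the Boolean connectives arise, and essentially all of the (minimal) content sits in the atomic case, where one merely reads off the definition of $R^{\gcompMN}$.

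First I would treat atomic formulas. An atomic $\cL$-formula in the variables $\bar v = (v_1,\dots,v_n)$ is either an equality between two of the variables or of the form $R(v_{i_1},\dots,v_{i_m})$ for some $m$-ary $R\in\cL$ with each $i_k\in\{1,\dots,n\}$. In the equality case, since $=$ is interpreted in $\gcompMN$ as honest equality of ordered pairs and the pairs substituted for $v_i,v_j$ both have first coordinate $a$, we get $(a,b_i)=(a,b_j) \iff b_i=b_j$, which is exactly $\cN_a\models b_i=b_j$. In the relational case, consider the tuple $\big((a,b_{i_1}),\dots,(a,b_{i_m})\big)$: all of its first coordinates equal $a$, so $\bigvee_{1\le k\ne l\le m} a_{i_k}\ne a_{i_l}$ fails and this tuple cannot lie in the second set in the definition of $R^{\gcompMN}$; hence it belongs to $R^{\gcompMN}$ iff it belongs to the first set, i.e. iff $\cN_a\models R(b_{i_1},\dots,b_{i_m})$. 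This atomic step is what I would single out as the ``main obstacle'', though in truth it is no obstacle at all; the only thing needing any care is that repeated variables are allowed, and they cause no difficulty because the generalized product is defined on arbitrary $n$-tuples.

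For the inductive step, assume the statement holds for $\psi$, $\psi_1$, $\psi_2$. If $\phi=\neg\psi$, then $\gcompMN\models\neg\psi\big((a,b_1),\dots,(a,b_n)\big)$ iff $\gcompMN\not\models\psi\big((a,b_1),\dots,(a,b_n)\big)$ iff, by the induction hypothesis, $\cN_a\not\models\psi(b_1,\dots,b_n)$, iff $\cN_a\models\neg\psi(b_1,\dots,b_n)$. If $\phi=\psi_1\ast\psi_2$ with $\ast\in\{\land,\lor,\rightarrow\}$, the desired equivalence is immediate from the two induction hypotheses and the semantics of $\ast$. As $\phi$ is quantifier-free there are no quantifier cases, so the induction --- and the lemma --- is complete. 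In the next lemmas this fibrewise statement will be combined with the translation $\phi\mapsto\widetilde{\phi}$ to analyze quantifier-free formulas of $\gcompMN^s$.
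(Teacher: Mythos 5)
Your proof is correct and is essentially the paper's argument spelled out: the paper simply notes that $e_a(b):=(a,b)$ is an $\cL$-embedding of $\cN_a$ into $\gcompMN$ (which is exactly your atomic-case verification) and then invokes the standard fact that embeddings preserve quantifier-free formulas in both directions, which is your induction on connectives. So there is no gap and no genuinely different route, only a more explicit unpacking of the same idea.
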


\begin{proof}
	
	Define $e_a:\cN_a\to \scomp{\cM}{\cN_a}_{a\in \cM}$ by $e_a(b):=(a,b)$.
	By definition of $\gcompMN$ this is an $\cL$-embedding and thus the claim follows.
\end{proof}

\begin{definition} Let $\cM, \{\cN_a\}_{a\in M}$ be $\cL$-structures.
	Let $\phi\left(\bar{v}\right)$ be an $\cL$-formula, $\widebar{(a,b)} \in \gcompMN$. We say $\widebar{(a,b)}$ is an \emph{admissible assignment} for $\phi$ when for all $R \in \cL$,  if $R(v_{i_1},\dots, v_{i_k})$ occurs in $\phi$, then $\bigvee_{1\leq l<m\leq k} a_{i_l}\neq a_{i_m}$.
\end{definition}

\begin{lemma}\label{MNiffM}
	If $\phi\left(\bar{v}\right)$ is a quantifier-free $\cL$-formula and $\widebar{(a,b)} \in \gcompMN$ is an admissible assignment for $\phi$,
	then \[\gcompMN^s \models \widetilde{\phi}\left(\widebar{(a,b)}\right) \iff \cM \models \phi\left(\bar{a}\right).\]
	
	In particular, 
	If $\phi\left(\bar{v}\right)$ is a quantifier-free $\cL$-formula such that the equality symbol does not occur in $\phi$,
	then \[\gcompMN^s \models \phi\left(\widebar{(a,b)}\right) \Leftrightarrow \cM \models\phi\left(\bar{a}\right) .\]
	
\end{lemma}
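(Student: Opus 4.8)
The plan is to prove the statement by induction on the structure of the quantifier-free formula $\phi(\bar v)$, with the twist that the inductive hypothesis must be carried for \emph{all} admissible assignments simultaneously (not a fixed one), since the Boolean connectives do not change the assignment but we need it available for every subformula. So I would set up the induction to prove: for every quantifier-free $\phi(\bar v)$ and every $\widebar{(a,b)}$ that is admissible for $\phi$, the biconditional $\gcompMN^s \models \widetilde{\phi}(\widebar{(a,b)}) \iff \cM \models \phi(\bar a)$ holds.

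For the base case I would split into the two kinds of atomic formulas. If $\phi$ is ``$v_i = v_j$'', then $\widetilde\phi$ is $s(v_i,v_j)$, and by the interpretation of $s$ in $\gcompMN^s$ we have $\gcompMN^s \models s((a_i,b_i),(a_j,b_j))$ iff $a_i = a_j$ iff $\cM \models v_i = v_j$ under the assignment $\bar a$ — here no admissibility hypothesis is even needed. If $\phi$ is $R(v_{i_1},\dots,v_{i_k})$ for $R\in\cL$, then $\widetilde\phi = \phi$, and admissibility guarantees $\bigvee_{l<m} a_{i_l} \neq a_{i_m}$; now I invoke the definition of $R^{\gcompMN}$: since the assignment is not ``constant on the first coordinate'', the tuple lies in $R^{\gcompMN}$ exactly when the first kind of disjunct fails and the second kind applies, i.e.\ exactly when $\cM \models R(a_{i_1},\dots,a_{i_k})$. (One should note that the second set in Definition \ref{defGenProduct} is phrased for the tuple $\big((a_1,b_1),\dots,(a_n,b_n)\big)$ listing coordinates in order, but applied with a repetition-allowing index list $i_1,\dots,i_k$ it gives exactly this; the point is that the membership condition only refers to the $\cM$-relation on the first coordinates together with the non-constancy clause.) The Boolean steps $\neg\beta$ and $\alpha \ast \beta$ are immediate since $\widetilde{\cdot}$ commutes with connectives, the satisfaction relation commutes with connectives on both sides, and any assignment admissible for $\phi$ is admissible for each of its subformulas (the set of relation-atoms occurring only shrinks).

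For the ``in particular'' clause: if the equality symbol does not occur in $\phi$, then $\widetilde\phi = \phi$ syntactically, and \emph{every} assignment $\widebar{(a,b)}$ is admissible — wait, that is false in general; rather, one observes that the conclusion $\gcompMN^s \models \phi(\widebar{(a,b)}) \iff \cM \models \phi(\bar a)$ is claimed for \emph{all} $\widebar{(a,b)}$, so here I cannot just quote the main part. Instead I would argue directly: again induct on $\phi$ (equality-free), where the only atomic case is $R(v_{i_1},\dots,v_{i_k})$; if the $a_{i}$'s appearing are not all equal the main part applies, and if they \emph{are} all equal then by Lemma \ref{MNiffN} (with all first coordinates equal to the common value $a$) we get $\gcompMN \models R(\widebar{(a,b)}) \iff \cN_a \models R(\bar b)$, which need \emph{not} agree with $\cM \models R(\bar a)$ — so the ``in particular'' statement as I'm reading it would be false. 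The resolution must be that the intended reading restricts to assignments where not all $a_i$ coincide, or more plausibly that when equality is absent one only ever evaluates on admissible assignments in the contexts where this is used; I would therefore state and prove it with the hypothesis that $\widebar{(a,b)}$ is admissible for $\phi$ retained, making the ``in particular'' a trivial specialization of the main claim (since $\widetilde\phi=\phi$).

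The main obstacle, then, is purely a matter of correctly tracking the admissibility hypothesis through the induction and being careful about index lists with repetitions in the atomic $R$-case; there is no deep content, but the bookkeeping about which disjunct in the definition of $R^{\gcompMN}$ is active — and making sure the ``equality does not occur'' corollary is stated with the right hypotheses — is where an error would creep in.
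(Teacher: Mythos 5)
Your proposal is correct and takes essentially the same approach as the paper: the paper's proof consists of exactly your two atomic base cases (the interpretation of $s$ for equality atoms, and admissibility together with the definition of $R^{\gcompMN}$ for relation atoms) followed by induction on the complexity of quantifier-free formulas. Your reading of the ``in particular'' clause is also the intended one — the admissibility hypothesis on $\widebar{(a,b)}$ is retained there (and is verified in every application of the lemma later in the paper), the clause merely recording that $\widetilde{\phi}=\phi$ when equality does not occur, so your counterexample analysis correctly rules out the unrestricted reading rather than revealing a flaw in the lemma.
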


\begin{proof}\ 
	\begin{itemize}
		\item If $\phi$ is of the form ``$v_1=v_2$'', this follows by definition of $s$.
		\item If $\phi$ is of the form $R(v_{i_1},\dots,v_{i_k})$, since $\widebar{(a,b)}$ is an admissible assignment, by definition of $\gcompMN^s$, \[\gcompMN^s \models \phi\left(\widebar{(a,b)}\right) \iff \cM \models \phi\left(\bar{a}\right)\] and $\widetilde{\phi} = \phi$.
		\item For a general quantifier-free $\phi$ the claim follows by induction on the complexity of $\phi$.
	\end{itemize}
\end{proof}

\begin{definition}
	A formula $\phi\left(\bar{v}\right) $ is called a \emph{complete equality diagram} if it is a consistent conjunction of formulas of the form ``$x=y$'' and ``$x\neq y$'' such that for all $1\leq i,j\leq n$, either $\phi\left(\bar{v}\right) \vdash v_i=v_j$ or $\phi\left(\bar{v}\right) \vdash v_i\neq v_j$.
\end{definition}

\begin{lemma}\label{MNiffMextended} 
	Let $T$ be a transitive theory. For every quantifier-free $\cL$-formula $\vphi\left(\bar{v}\right)$ there is a quantifier-free $\cL \cup \{s\}$-formula $\vphi^{\prime}\left(\bar{v}\right)$ such that if
	$\cM \models T, \{\cN_a\}_{a\in \cM} $ are $\cL$-structures, $\widebar{(a,b)}\in \gcompMN^s$, then
	\[ \cM\models \vphi\left(\bar{a}\right) \iff \gcompMN^s\models \vphi^{\prime}\left(\widebar{(a,b)}\right). \]
\end{lemma}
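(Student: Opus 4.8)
The plan is to split on the equality type of the tuple $\bar a$ in $\cM$ and, for each type, reduce $\varphi$ to a formula whose relational atoms genuinely ``see'' $\cM$, so that Lemma~\ref{MNiffM} applies; transitivity of $T$ is used exactly once, to handle the atoms that degenerate when provably‑equal variables are identified. Concretely, for each equivalence relation $E$ on $\{1,\dots,n\}$ let $\delta_E(\bar v)$ be the complete equality diagram asserting $v_i=v_j$ for $(i,j)\in E$ and $v_i\neq v_j$ for $(i,j)\notin E$. There are finitely many such $E$, and for any $\bar a$ exactly one of the $\delta_E$ holds. Fix, once and for all, a system of representatives $\{i_1<\dots<i_r\}$ for the $E$‑classes and let $\rho_E$ map each index to the representative of its class; note that whenever $\cM\models\delta_E(\bar a)$, distinct representatives receive distinct values in $\cM$.

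For each $E$ I would build a quantifier‑free $\cL$‑formula $\varphi_E^{*}(\bar v)$ from $\varphi$ in three passes: (i) substitute $v_j\mapsto v_{\rho_E(j)}$ throughout $\varphi$; (ii) replace each resulting equality atom $v_{\rho_E(i)}=v_{\rho_E(j)}$ by $v_1=v_1$ if $\rho_E(i)=\rho_E(j)$ and by $v_1\neq v_1$ otherwise; (iii) replace each resulting relational atom $R(v_{l_1},\dots,v_{l_k})$ all of whose arguments are the \emph{same} variable by $v_1=v_1$ if $T\models\forall x\,R(x,\dots,x)$ and by $v_1\neq v_1$ otherwise. Pass (iii) is the only nontrivial one, and it is licensed precisely by transitivity of $T$: $R(x,\dots,x)$ is a formula in the single free variable $x$, so, as $|S_1(T)|=1$, $T$ decides it, and hence (since $\cM\models T$) so does $\cM$, independently of which element is substituted. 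A routine induction on the structure of $\varphi$, using that $a_{\rho_E(j)}=a_j$ under $\delta_E$, then yields: if $\cM\models\delta_E(\bar a)$ then $\cM\models\varphi(\bar a)\iff\cM\models\varphi_E^{*}(\bar a)$.

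After pass (iii) every relational atom occurring in $\varphi_E^{*}$ mentions at least two distinct variables, all of them representatives; hence, when $\cM\models\delta_E(\bar a)$, the assignment $\widebar{(a,b)}$ is admissible for $\varphi_E^{*}$, so Lemma~\ref{MNiffM} gives $\cM\models\varphi_E^{*}(\bar a)\iff\gcompMN^s\models\widetilde{\varphi_E^{*}}\big(\widebar{(a,b)}\big)$. Also $\gcompMN^s\models\widetilde{\delta_E}\big(\widebar{(a,b)}\big)\iff\cM\models\delta_E(\bar a)$, because $s$ holds of $(a_i,b_i),(a_j,b_j)$ exactly when $a_i=a_j$. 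I would then take
\[\varphi'(\bar v):=\bigvee_{E}\Big(\widetilde{\delta_E}(\bar v)\wedge\widetilde{\varphi_E^{*}}(\bar v)\Big),\]
the disjunction over all equivalence relations $E$ on $\{1,\dots,n\}$, which is a quantifier‑free $\cL\cup\{s\}$‑formula. For a given $\widebar{(a,b)}$, exactly one disjunct has its first conjunct true, namely the $E$ recording the equality type of $\bar a$; for that $E$, the two displayed equivalences show the second conjunct is true iff $\cM\models\varphi_E^{*}(\bar a)$ iff $\cM\models\varphi(\bar a)$, and all other disjuncts are false. Hence $\gcompMN^s\models\varphi'\big(\widebar{(a,b)}\big)\iff\cM\models\varphi(\bar a)$, as required.

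I expect pass (iii) to be the only place any thought is needed, and it is exactly the place the transitivity hypothesis is indispensable: after identifying provably‑equal variables, an atom $R(v_{i_1},\dots,v_{i_k})$ can collapse to $R(v_j,\dots,v_j)$, whose value in $\gcompMN^s$ is computed inside the fibre $\cN_{a_j}$ rather than in $\cM$ (cf.\ Remark~\ref{unary}), so it is only because $T$ fixes the truth of $R(a_j,\dots,a_j)$ in $\cM$ uniformly in $a_j$ that this value can be hard‑coded into $\varphi_E^{*}$ — and the example preceding the proof of Theorem~\ref{compositionQE} shows this is necessary.
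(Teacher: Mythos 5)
Your proof is correct and follows essentially the same route as the paper: split by complete equality diagrams, collapse provably-equal variables to representatives, use transitivity of $T$ exactly to decide the degenerate atoms $R(x,\dots,x)$, and then apply Lemma \ref{MNiffM} after replacing $=$ by $s$, taking a disjunction over the diagrams. Your three-pass construction of $\vphi_E^{*}$ just makes explicit the ``we may assume'' reductions in the paper's argument.
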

\begin{proof} Let $\{\psi_j\}_{j\in J}$ be all complete equality diagrams on $\bar{v}$. Notice that
	\[
	\vdash \vphi\left(\bar{v}\right)\leftrightarrow \left( \vphi\left(\bar{v}\right)\land \bigvee_{j\in J} \psi_j\left(\bar{v}\right) \right)  
	\leftrightarrow \bigvee_{j\in J} \left( \vphi\left(\bar{v}\right)\land  \psi_j\left(\bar{v}\right) \right) .
	\]
	So by dealing with each disjunct separately, it suffices to find a quantifier-free $\cL\cup \{s\}$-formula $\vphi^{\prime}\left(\bar{v}\right)$ such that
	for all $\cL$-structures
	$\cM, \{\cN_a\}_{a\in \cM} $ such that $\cM\models T$ and for all $\widebar{(a,b)}\in \gcompMN^s$,
	\[ \cM\models \vphi\left(\bar{a}\right)\land \psi\left(\bar{a}\right)  \iff \gcompMN^s\models \vphi^{\prime}\left(\widebar{(a,b)}\right) \]
	where $\psi$ is a complete equality diagram.
	
	Next, for every $v_j,v_k$ such that $j<k$ and $\psi\left(\bar{v}\right)\vdash v_j = v_k$, we can replace every occurrence of $v_k$ with $v_j$, so we may assume 
	$\psi\left(\bar{v}\right) = \bigwedge_{1\leq j<k \leq n} v_j\neq v_k$. Secondly, since $T$ is transitive, every formula of the form $R(x,\dots,x)$ is equivalent either to ``$x=x$'' or to ``$x\neq x$'', so we may assume there are no such occurrences in $\vphi$. Let $\widetilde{\psi}, \widetilde{\vphi}$ be the formulas obtained from 
	$\psi,{\vphi}$ respectively, by replacing `$=$' with $s$. We claim that for all $\cL$-structures
	$\cM \models T, \{\cN_a\}_{a\in \cM} $ and $\widebar{(a,b)}\in \gcompMN^s$,
	\[ \cM\models \vphi\left(\bar{a}\right)\land \psi\left(\bar{a}\right)  \iff \gcompMN^s\models \widetilde{\vphi}\left(\widebar{(a,b)}\right) \land \widetilde{\psi}\left(\widebar{(a,b)}\right) .\]
	
	Indeed:
	by definition,  $\cM\models \psi\left(\bar{a}\right) \iff \gcompMN^s\models \widetilde{\psi}\left(\widebar{(a,b)}\right)$. Assuming $\cM\models \psi\left(\bar{a}\right)$, since there are no occurrences of the form  $R(x,\dots,x)$ in $\vphi$, $\widebar{(a,b)}$ is an admissible assignment for $\vphi$. So by Lemma \ref{MNiffM},  \[\cM\models \vphi\left(\bar{a}\right) \iff \gcompMN^s\models \widetilde{\vphi}\left(\widebar{(a,b)}\right). \]
\end{proof}

Before continuing to the main proof -- one last definition, that stand at the core of the proof of Theorem \ref{compositionQE}:

\begin{definition}
	An $\{s\}$-formula $\phi\left(\bar{v}\right)$ is called a \emph{complete $s$-diagram} if it is a conjunction of formulas of the form $s(x,y)$ or $\neg s(x,y)$ such that for every $1\leq i,j\leq n$, either $\phi(v_1,\dots,v_n) \vdash s(v_i,v_j)$ or $\phi\left(\bar{v}\right) \vdash \neg s(v_i,v_j)$ and $\phi$ is consistent with $s$ being an equivalence relation.
	
	Notice that $\phi$ is a complete $s$-diagram iff it is of the form $\widetilde{\psi}$ for some complete equality diagram $\psi$.
\end{definition}

\begin{proof}[Proof of Theorem \ref{compositionQE}]
	We provide a technical proof, noting that this proof is in fact constructive, using the elimination of quantifiers from $T_1$ and $T_2$.
	
	Let $\phi = \exists w \bigwedge_{i\in I} \theta_i\left(\bar{v},w\right) $ such that $\{\theta_i\}_{i\in I}$ are atomic and negated atomic formulas.
	We need to find a quantifier-free $\cL \cup \{s\}$-formula $\varphi$ such that whenever $\cM\models T_1$ and $\{\cN_a\}_{a\in \cM}\models T_2$, \[\gcompMN^s \models \phi\left(\bar{v}\right) \leftrightarrow \varphi\left(\bar{v}\right).\]
	
	First, since $\vdash \exists w \big( \chi\left(\bar{v},w\right) \land \theta\left(\bar{v}\right) \big) \leftrightarrow \exists w \big( \chi\left(\bar{v},w\right)\big)  \land \theta\left(\bar{v}\right)$ we may assume that $w$ occurs in $\theta_i$ for all $i\in I$.

	In order to proceed with the proof we will use complete $s$-diagrams, in a way similar to the way complete equality diagrams were used in the proof of Lemma \ref{MNiffMextended}:

	Let $T_{equiv}$ be the $\{s\}$-theory stating that $s$ is an equivalence relation and let $\{\widetilde{\psi}_j\}_{j\in J}$ be all the complete $s$-diagrams on $\bar{v},w$. There are finitely many such and
	\[T_{equiv} \models \exists w \bigwedge_{i\in I} \theta_i\left(\bar{v},w\right) \leftrightarrow  \exists w \big( \bigwedge_{i\in I} \theta_i\left(\bar{v},w\right) \land \bigvee_{j\in J} \widetilde{\psi}_j\left(\bar{v},w\right)\big)\]
	\[\leftrightarrow \exists w  \bigvee_{j\in J} \big( \widetilde{\psi}_j\left(\bar{v},w\right) \land \bigwedge_{i\in I} \theta_i\left(\bar{v},w\right) \big)\]
	\[\leftrightarrow \bigvee_{j\in J}  \exists w  \big( \widetilde{\psi}_j\left(\bar{v},w\right) \land \bigwedge_{i\in I} \theta_i\left(\bar{v},w\right) \big) .\]
	Since $\gcompMN^s \models T_{equiv}$ for every $\cM$ and $\{\cN_a\}_{a\in \cM}$, we may assume $\phi$ is of the form $ \exists w  \big( \widetilde{\psi}\left(\bar{v},w\right) \land \bigwedge_{i\in I} \theta_i\left(\bar{v},w\right) \big)$ where $\widetilde{\psi}$ is a complete $s$-diagram, $\theta_i$ are atomic and negated atomic formulas such that $w$ occurs in each $\theta_i$.
	
	Next, let $$I_2:=\Set{i\in I | \widetilde{\psi} \vdash s(v,w) \text{ for all } v\text{ occuring in }\theta_i}$$
	$$I_1:=\Set{i\in I | \widetilde{\psi} \vdash \neg s(v,w) \text{ for some } v\text{ occuring in }\theta_i} = I\setminus I_2$$
	and separate $\bar{v}$ to $\bar{v}^1, \bar{v}^2$, where $\bar{v}^2$ are the variables occurring in $\bigwedge_{i\in I_2} \theta_i\left(\bar{v},w\right)$ and $\bar{v}^1$ the ones not occurring there.
	So $\phi$ is of the form
	\[ \exists w  \left( \widetilde{\psi}\left(\bar{v}^1, \bar{v}^2 ,w\right) \land \bigwedge_{i\in I_1} \theta_i\left(\bar{v}^1, \bar{v}^2,w\right)\land \bigwedge_{i\in I_2} \theta_i\left(\bar{v}^2,w\right) \right)\]
	where $\widetilde{\psi}$ is a complete $s$-diagram. We may further assume `$=$' and $s$ do not occur in $\bigwedge_{i\in I_1} \theta_i\left(\bar{v}^1, \bar{v}^2,w\right)$, for such an occurrence would be either superfluous with respect to $\widetilde{\psi}$ or inconsistent with $\widetilde{\psi}$.

		If $\bar{v}^1 = \left(v_1^1,\dots, v^1_{n_1}\right), \bar{v}^2 = \left(v_1^2,\dots, v^2_{n_2}\right)$, let
		\begin{align*}
		& \bar{a}^1 = \left(a_1^1,\dots, a_{n_1}^1\right) , \ 
		\bar{b}^1 = \left(b_1^1,\dots, b_{n_1}^1\right) \\
		& \bar{a}^2 = \left(a_1^2,\dots, a_{n_2}^2\right) , \ 
		\bar{b}^2 = \left(b_1^2,\dots, b_{n_2}^2\right)
		\end{align*}
		and denote
		\begin{align*}
		& \widebar{(a,b)}^1 := \left(\left(a_1^1,b_1^1\right),\dots,\left(a_{n_1}^1,b_{n_1}^1\right) \right) \ 
		& \widebar{(a,b)}^2 := \left(\left(a_1^2,b_1^2\right),\dots,\left(a_{n_2}^2,b_{n_2}^2\right) \right). \ 
		\end{align*} 
		\pagebreak
		
		\noindent Claim. The following are equivalent:
		\begin{enumerate}[{label=(\arabic*)}]
			\item  \begin{align*} & \gcompMN^s \models \\ & \exists w  \left( \widetilde{\psi}\left(\widebar{(a,b)}^1, \widebar{(a,b)}^2,w\right) \land \bigwedge_{i\in I_1} \theta_i\left(\widebar{(a,b)}^1, \widebar{(a,b)}^2,w\right)\land \bigwedge_{i\in I_2} \theta_i\left(\widebar{(a,b)}^2,w\right) \right) \end{align*}
			\item There is an $a\in \cM$ such that: $a_{j}^2 = a$ for all $1\leq j\leq n_2$,
			\[ \cM \models \exists w  \left( {\psi}\left(\bar{a}^1,\bar{a}^2,w\right) \land \bigwedge_{i\in I_1} \theta_i\left(\bar{a}^1,\bar{a}^2,w\right)\right)  \text{ and } \cN_a \models \exists w\left( \bigwedge_{i\in I_2} \theta_i\left(\bar{b}^2,w\right)\right) \]
			
		\end{enumerate}

	 \begin{sproof} \ \\
			 \noindent $(\Rightarrow)$ Let $c \in \cM$ and $d\in \cN_c$ such that
			
			\begin{align*} & \gcompMN^s \models \\ & \widetilde{\psi}\left(\widebar{(a,b)}^1, \widebar{(a,b)}^2,(c,d)\right) \land  \bigwedge_{i\in I_1} \theta_i\left(\widebar{(a,b)}^1, \widebar{(a,b)}^2,(c,d)\right)\land \bigwedge_{i\in I_2} \theta_i\left(\widebar{(a,b)}^2,(c,d)\right) . \end{align*}
			
			 By definition, $\cM \models \psi\left(\bar{a}^1,\bar{a}^2,c\right)$, and since $\psi\left(\bar{a}^1, \bar{a}^2,c\right)$ implies that \\  $\widebar{(a,b)}^1, \widebar{(a,b)}^2,(c,d)$ is an admissible assignment for $\bigwedge_{i\in I_1} \theta_i\left(\bar{v}^1, \bar{v}^2,w\right)$, by Lemma \ref{MNiffM},
			\[\cM \models  \bigwedge_{i\in I_1} \theta_i\left(\bar{a}^1, \bar{a}^2, c\right) .\]
			
			Furthermore, by the definition of $I_2$, \[\psi\left(\bar{a}^1,\bar{a}^2,c\right) \vdash\left( \bigwedge_{1\leq j\leq n_2} a_j^2 = c\right) \land \left( \bigwedge_{1\leq j,k\leq n_2} a_j^2 =  a_k^2 \right).\]
			So letting $a:=c$, in fact
			\[\gcompMN^s \models \bigwedge_{i\in I_1} \theta_i\left(\left(a,b_1^2\right),\dots, \left(a,b_{n_2}^2\right) , (a,d)\right)  \]
			so by Lemma \ref{MNiffN},
			\[\cN_a \models \bigwedge_{i\in I_2} \theta_i\left(\bar{b}^2,d\right)  .\]
			
			\noindent $(\Leftarrow)$ Let $a\in \cM$ be such that for all $1\leq j\leq n_2$, $a_{j_2}^2 = a$, and let $c\in \cM$ and $d\in \cN_a$ be such that
			\[\cM \models  {\psi}\left(\bar{a}^1,\bar{a}^2,c\right) \land \bigwedge_{i\in I_1} \theta_i\left(\bar{a}^1, \bar{a}^2, c\right)  \text{ and  }\ \cN_a \models   \bigwedge_{i\in I_2} \theta_i\left(\bar{b}^2, d\right). \]
			Since ${\psi}\left(\bar{a}^1,\bar{a}^2,c\right) \vdash\bigwedge_{1\leq j\leq n_2} a_j^2 = c $, in fact $c = a$, so $d\in \cN_c$ and 
			\[\cN_c \models   \bigwedge_{i\in I_2} \theta_i\left(\bar{b}^2, d\right), \] thus by Lemma \ref{MNiffN},
			\[\gcompMN^s \models  \bigwedge_{i\in I_2} \theta_i\left(\widebar{(a,b)}^2, (c,d)\right)  .\]
			Since  ${\psi}\left(\bar{a}^1,\bar{a}^2,c\right)$ implies that $\widebar{(a,b)}^1, \widebar{(a,b)}^2,(c,d)$ is an admissible assignment for $\phi_1$, by Lemma \ref{MNiffM}, \[\gcompMN^s \models  \widetilde{\psi}\left(\widebar{(a,b)}^1, \widebar{(a,b)}^2,(c,d)\right) \land   \bigwedge_{i\in I_1} \theta_i\left(\widebar{(a,b)}^1, \widebar{(a,b)}^2, (c,d)\right). \]

	\end{sproof}
	
	Assuming $\cM \models T_1$ and  $\{\cN_a\}_{a\in \cM}\models T_2$, by QE of $T_1$ and $T_2$, let $\vphi_1\left(\bar{v}\right), \vphi_2\left(\bar{v}\right)$ be quantifier-free $\cL$-formulas such that
	\[T_1\models \exists w  \left( \widetilde{\psi}\left(\bar{v}^1,\bar{v}^2,w\right) \land \bigwedge_{i\in I_1} \theta_i\left(\bar{v}^1,\bar{v}^2,w\right)\right) \leftrightarrow \vphi_1\left(\bar{v}^1,\bar{v}^2\right) \text{\ \  and }\]\[ T_2 \models \exists w\left( \bigwedge_{i\in I_2} \theta_i\left(\bar{v}^2,w\right)\right)\leftrightarrow \vphi_2\left(\bar{v}^2\right) .\]
	
	So (2) from the claim is equivalent to:
	
	\begin{enumerate}[{label=(\arabic*)}]
		\setcounter{enumi}{2}
		\item  There is an $a\in \cM$ such that: $a_{j}^2 = a$ for all $1\leq j\leq n_2$, 
		\[ \cM \models  \vphi_1\left(\bar{a}^1,\bar{a}^2\right) \text{ and } \cN_a \models  \vphi_2\left(\bar{b}^2\right) .\]
	\end{enumerate}

	By Lemmas \ref{MNiffN} and \ref{MNiffMextended}, there is an $\cL$-formula $\vphi_1^{\prime}$ such that (3) above is equivalent to: 
	
	\begin{enumerate}[{label=(\arabic*)}] \setcounter{enumi}{3}
		\item $\bigwedge_{ 1\leq j,k\leq n_2 } a_j^2 = a_k^2$ and
		\[ \gcompMN^s \models  \vphi_1^{\prime}\left(\widebar{(a,b)}^1, \widebar{(a,b)}^2\right) \text{ and } \gcompMN^s \models  \vphi_2\left(\widebar{(a,b)}^2\right) ,\]
		which, in turn, is equivalent to
		\item  \begin{align*} & \gcompMN^s \models \\ & \left(\bigwedge_{1\leq j,k\leq n_2} s\left(\left(a_j^2,b_j^2\right),\left(a_k^2,b_k^2\right)\right)\right) \land \vphi_1^{\prime}\left(\widebar{(a,b)}^1, \widebar{(a,b)}^2\right) \land \vphi_2\left(\widebar{(a,b)}^2\right) .\end{align*}
	\end{enumerate}

	Setting \[\vphi\left(\bar{v}^1,\bar{v}^2\right): = \left(\bigwedge_{1\leq j,k\leq n_2} s\left(v_j^2,v_k^2\right)\right) \land \vphi_1^{\prime}\left(\bar{v}^1, \bar{v}^2\right) \land \vphi_2\left(\bar{v}^2\right), \]
	we get that for all $\widebar{(a,b)}^1\in \left(\gcompMN^s \right)^{n_1}$ and $\widebar{(a,b)}^2 \in \left(\gcompMN^s \right)^{n_2}$:
	\[\gcompMN^s \models \phi \left(\widebar{(a,b)}^1,\widebar{(a,b)}^2\right) \iff \gcompMN^s \models \vphi \left(\widebar{(a,b)}^1,\widebar{(a,b)}^2\right). \]
	So
	\[ \gcompMN^s \models \phi\left(\bar{v}^1,\bar{v}^2\right) \leftrightarrow \vphi\left(\bar{v}^1,\bar{v}^2\right)\]
	and $\vphi$ is quantifier-free.
	
	Let $\vphi_{\phi}$ be the quantifier-free $\cL \cup \{s\}$-formula obtained from $\phi$ by the above process. Let $T$ be the logical closure (all the logical consequences) of
	\[ T_{equiv} \cup \Set{\phi \leftrightarrow \vphi_{\phi} | \phi \text{ is of the form } \exists w  \left( \widetilde{\psi}\left(\bar{v},w\right) \land \bigwedge_{i\in I} \theta_i\left(\bar{v},w\right) \right) }. \]
	
	$T$ admits QE and by the above process, $\gcompMN^s\models T$ for all $\cM\models T_1$, $\{\cN_a\}_{a\in \cM}\models T_2$.
\end{proof}

Note that in the proof above, transitivity of $T$ is used to get from (3) to (4) as $\vphi_1$ can include occurrences of the form $R(x,\dots, x)$ that would be interpreted in the product differently in each copy of $\cN_a$, and in general we cannot use Lemma \ref{MNiffMextended} if $\Th(\cM)$ is not transitive.

We note that if $T_1$ and $T_2$ are complete, so is $T$ and thus:

\begin{corollary}
	If $\cM_1\equiv \cM_2$ and $\Set{\cN_a | a\in \cM_1\cup \cM_2}$ are pairwise elementarily equivalent then $\cM_1[\cN_a]_{a\in \cM_1}\equiv \cM_2[\cN_a]_{a\in \cM_2}$.

\end{corollary}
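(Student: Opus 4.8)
The plan is to reduce everything to Theorem~\ref{compositionQE} by passing to the $s$-expansions and then descending to the $\cL$-reducts. First I would fix notation: set $T_1 := \Th(\cM_1)$. Since $\cM_1 \equiv \cM_2$, this is a \emph{complete} $\cL$-theory having both $\cM_1$ and $\cM_2$ as models, and — we are working under the standing hypotheses of Theorem~\ref{compositionQE} — it admits QE and is transitive. Next, because the structures $\cN_a$ with $a \in M_1 \cup M_2$ are pairwise elementarily equivalent, the theory $T_2 := \Th(\cN_a)$ does not depend on which $a \in M_1 \cup M_2$ we choose; it is complete and admits QE. Thus $T_1$ and $T_2$ are genuinely well-defined single complete theories satisfying the hypotheses of Theorem~\ref{compositionQE}.

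Now I would apply Theorem~\ref{compositionQE} to $T_1$ and $T_2$ to obtain an $\cL\cup\{s\}$-theory $T$ admitting QE such that $\cM_i[\cN_a]_{a\in M_i}^s \models T$ for $i = 1,2$. As recorded just before this corollary, since $T_1$ and $T_2$ are complete, $T$ is complete as well. Two models of a common complete theory are elementarily equivalent, so $\cM_1[\cN_a]_{a\in M_1}^s \equiv \cM_2[\cN_a]_{a\in M_2}^s$ as $\cL\cup\{s\}$-structures.

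Finally I would invoke the trivial fact that elementary equivalence is inherited by reducts: every $\cL$-sentence is in particular an $\cL\cup\{s\}$-sentence, and, by definition, $\cM_i[\cN_a]_{a\in M_i}$ is the $\cL$-reduct of $\cM_i[\cN_a]_{a\in M_i}^s$. Hence $\cM_1[\cN_a]_{a\in M_1} \equiv \cM_2[\cN_a]_{a\in M_2}$, which is the assertion of the corollary.

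There is essentially no obstacle to overcome here — all the real work was done in Theorem~\ref{compositionQE}. The only two points requiring any attention are (i) that the inputs to Theorem~\ref{compositionQE} really are single well-defined complete theories, which is exactly what the hypotheses $\cM_1 \equiv \cM_2$ and pairwise elementary equivalence of the $\cN_a$ provide, and (ii) that completeness of $T_1$ and $T_2$ transfers to $T$, which has already been noted above; the rest is routine bookkeeping about reducts.
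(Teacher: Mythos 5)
Your argument is correct and is essentially the paper's own (implicit) proof: completeness of $T_1$ and $T_2$ makes the theory $T$ produced by Theorem \ref{compositionQE} complete, both $s$-products are models of $T$ and hence elementarily equivalent, and elementary equivalence descends to the $\cL$-reducts $\cM_i[\cN_a]_{a\in M_i}$. Like the paper, you read the corollary under the standing hypotheses of Theorem \ref{compositionQE} (quantifier elimination and transitivity of $T_1$), which is exactly the intended reading given its placement right after the remark that completeness of $T_1,T_2$ yields completeness of $T$.
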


We leave it as an exercise to show that $s$ is necessary; i.e. find $\cL$-structures $\cM$ and $\cN$ (even elementarily indivisible), such that $\cM$ and $\cN$ both admit QE but $\cM[\cN]$ does not (not even model complete).
\bigskip

\subsection{Application to elementary indivisibility}\label{applyQE}

In this subsection, we provide an immediate application of Theorem \ref{compositionQE} to elementary indivisibility, mainly proving that the lexicographic product of two elementarily indivisible  structures is elementarily indivisible. Here we only use the result of QE for $\scomp{\cM}{\cN}^s$, though in the following sections the full power of Theorem \ref{compositionQE} regarding the generalized product will be needed.

\begin{definition}
Let $\cM$ and $\cM'$ be structures with the same universe, not necessarily in the same language.

We say $\cM'$ is a \emph{language reduct} of $\cM$ if $\cM' = \cM \upharpoonright\cL_0$ for some $\cL_0\subseteq \cL$.

We say $\cM'$ is a \emph{definitional reduct} of $\cM$ if every $\emptyset$-definable relation in $\cM'$ is $\emptyset$-definable in $\cM$.
\end{definition}

\begin{notation}\label{defMorleyzation}
	Let $\closure{\cL}$ be an expansion of $\cL$ such that for each $\cL$ -formula $\phi\left(\bar{v}\right)$ with $n$ free variables, we add an $n$-ary relation $R_{\phi}$ (we denote by $\phi_R\left(\bar{v}\right)$ the formula that defined $R$).
	
	For any $\cL$-structure $\cM$, we define $\closure{\cM}$ an  $\closure{\cL}$ -structure whose universe is the universe of $\cM$ , and for every $n$-ary relation symbol $R \in \closure{\cL}$ we set
	$$ R^{\closure{\cM}} =
	\left\{
	\begin{array}{ll}
	R^{\cM}  & \mbox{if } R \in \cL \\
	\{\ \bar{a} \in \cM^n\ |\ \cM \models \phi_R\left(\bar{a}\right) \ \} & \mbox{if } R \in \closure{\cL} \setminus \cL 
	\end{array}
	\right. $$
	We call $\closure{\cM}$ the Morleyzation of $\cM$. It is well-known and an easy exercise to verify that $\widehat{\cM}$ admits QE.
\end{notation}

We note that while it is obvious that if $\cM$ is (elementarily) indivisible and $\cM'$ is a language reduct of $\cM$, then $\cM'$ is also (elementarily) indivisible, this is not true for definitional reducts. For example consider the ordered natural numbers $\langle \omega,<\rangle$.
The following lemma implies this is not the case in the \emph{elementarily} indivisible context. Because of this, and following \cite{Mac11} and the extensive study done in the subject, we use \emph{reduct} as an abbreviation for definitional reduct.

\begin{lemma}\label{TMI} Let $\cM$ be an $\cL$-structure.
	The following are equivalent:
	\begin{enumerate}[label=(\arabic*)]
		\item $\cM$ is elementarily indivisible.
		\item every reduct of $\cM$ is elementarily indivisible.
		\item every reduct of $\cM$ is indivisible. 

		\item $\widehat{\cM}$ is indivisible.
		\item $\widehat{\cM}$ is elementarily indivisible.
	\end{enumerate}
\end{lemma}

\begin{proof}\ 
	
	\begin{itemize}
		\item (2)$\Rightarrow$(3)$\Rightarrow$(4) is obvious, since $\closure{\cM}$ is a reduct of $\cM$.
		\item (4)$\Rightarrow$(5) is by quantifier elimination of the Morleyzation, and model completeness (Remark \ref{QEmodelComplete}).
		\item (5)$\Rightarrow$(1) is due to elementary indivisibility respecting language reducts.
		\item (1)$\Rightarrow$(2) Let $\cM'$ be a reduct of $\cM$ in a language $\cL'$. Let $c:\cM\to \{0,1\}$ be a coloring and let $\cN\subseteq \cM$ be a monochromatic elementary substructure isomorphic to $\cM$ with universe $N$. We will show the induced $\cL'$-substructure of $\cM'$ on $N$ is an elementary substructure isomorphic to $\cM'$. Since $\cM'$ is a reduct of $\cM$, for every $\cL'$-formula $\phi$, there is an $\cL$-formula $\vphi_{\phi}$ such that $\cM'\models \phi\left(\bar{a}\right)\iff \cM\models \vphi\left(\bar{a}\right)$ for every $\bar{a}\in \cM$. In particular, for every $R \in \cL'$, there is an $\cL$-formula $\vphi_R$ such that $\cM'\models R\left(\bar{a}\right)\iff \cM\models \vphi_R\left(\bar{a}\right)$.
		
		Let $\cN'$ be the $\cL'$-structure whose universe is $N$ and for every $R \in \cL'$ 
		\[R^{\cN'}:=\Set{\bar{a} | \cN\models \vphi_R\left(\bar{a}\right) }.\]
		Since $\cN\cong \cM$, also $\cN'\cong \cM'$. Since $\cN\prec\cM$, for every $R\in \cL'$ we have
		\[ \cN'\models R\left(\bar{a}\right)\iff \cN\models \vphi_R\left(\bar{a}\right) \iff \cM\models \vphi_R\left(\bar{a}\right) \iff \cM'\models R\left(\bar{a}\right), \]
		so, in fact, $\cN'$ coincides with the induced $\cL'$-substructure of $\cM$ on $N$. But the above equivalence can also be achieved for $\cL'$-formulas:
		\[ \cN'\models \phi\left(\bar{a}\right)\iff \cN\models \vphi_\phi\left(\bar{a}\right) \iff \cM\models \vphi_\phi\left(\bar{a}\right) \iff \cM'\models \phi\left(\bar{a}\right). \]
		So $\cN'$ is an elementary substructure of $\cM'$.
		 
	\end{itemize}
\end{proof}

The following proposition is in fact almost identical to a part of  \cite[Proposition 2.14]{HKO11}, but for the sake of completeness we give a simple proof here.

\begin{proposition}\label{compositionIndivisible}
	If $\cM$ and $\cN$ are both indivisible then so is $\cM[\cN]^s$
\end{proposition}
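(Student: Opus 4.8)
The plan is to reduce the statement to the indivisibility of $\cN$, applied separately to each fiber, and then to the indivisibility of $\cM$, applied to the resulting pattern of colors. Fix a coloring $c:M\times N\to\{0,1\}$. For a fixed $a\in M$ the map $b\mapsto(a,b)$ is, by Lemma~\ref{MNiffN}, an isomorphism of $\cN$ onto the $\cL$-reduct of the fiber $\{a\}\times N$ inside $\cM[\cN]^s$ (the extra relation $s$ is the full relation on a single fiber, so it is irrelevant to colorings). Pulling $c\upharpoonright(\{a\}\times N)$ back along this isomorphism yields a $2$-coloring of $\cN$, so by indivisibility of $\cN$ there is $N_a'\subseteq N$ with $\cN\upharpoonright N_a'\cong\cN$ and $\{a\}\times N_a'$ $c$-monochromatic; write $\varepsilon(a)\in\{0,1\}$ for its color and fix an isomorphism $h_a:\cN\upharpoonright N_a'\to\cN$.

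Now $\varepsilon:M\to\{0,1\}$ is a $2$-coloring of $M$, so by indivisibility of $\cM$ there are $M'\subseteq M$ and $j\in\{0,1\}$ with $\cM\upharpoonright M'\cong\cM$ --- fix an isomorphism $g:\cM\upharpoonright M'\to\cM$ --- and $\varepsilon\equiv j$ on $M'$. Put $P:=\bigcup_{a\in M'}\{a\}\times N_a'$. Every element of $P$ lies in some $\{a\}\times N_a'$ with $a\in M'$, hence has color $\varepsilon(a)=j$, so $P$ is $c$-monochromatic. It then remains to produce an isomorphism $\cM[\cN]^s\upharpoonright P\to\cM[\cN]^s$.

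For this last step I would first observe, directly from Definition~\ref{defGenProduct} together with the fact that $\cM\upharpoonright M'$ and each $\cN\upharpoonright N_a'$ are \emph{induced} substructures, that the induced $\cL\cup\{s\}$-structure $\cM[\cN]^s\upharpoonright P$ coincides with the generalized product $(\cM\upharpoonright M')[\cN\upharpoonright N_a']^s_{a\in M'}$: a tuple lying in a single fiber $\{a\}\times N_a'$ satisfies a relation $R\in\cL$ iff $\cN\upharpoonright N_a'$ does, a tuple meeting at least two fibers satisfies $R$ iff $\cM\upharpoonright M'$ does, and $s$ is unchanged. Then the map $\Phi(a,b):=(g(a),h_a(b))$ is the desired isomorphism onto $\cM[\cN]^s$ (recall that a generalized product with constant fiber $\cN$ is just $\cM[\cN]^s$): $\Phi$ is a bijection onto $M\times N$ since $g$ and each $h_a$ are bijections; it respects $s$ since $a_1=a_2\iff g(a_1)=g(a_2)$; and it respects each $R\in\cL$ because on a single fiber the truth of $R$ is transported by the relevant $h_a$ while across fibers it is transported by $g$. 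The step needing most care is exactly this verification --- that an induced substructure of a lexicographic product built from induced substructures of the factors is again a product of that form, and is then carried isomorphically onto the whole product by isomorphisms of the factors; the two coloring steps themselves are routine bookkeeping.
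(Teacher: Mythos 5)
Your proposal is correct and follows essentially the same route as the paper: color each fiber, use indivisibility of $\cN$ fiberwise, push the resulting colors down to $\cM$, and assemble the monochromatic copy as $\bigcup_{a\in M'}\{a\}\times N_a'$ with the isomorphism $(a,b)\mapsto(g(a),h_a(b))$, which is exactly the map $F$ in the paper. The only difference is that you spell out the verification (that the induced substructure on $P$ is again a product and that the map preserves $s$ and each $R$ within and across fibers), which the paper leaves to the reader.
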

\begin{proof}
	Let $c:\cM[\cN]^s \rightarrow \{0,1\}$ be a coloring of $\cM[\cN]^s$.
	So for each $a \in \cM$, $c$ induces a coloring of $\{a\} \times \cN$ and $\{a\} \times \cN \cong\cN$, so $\{a\} \times \cN $ is indivisible.
	So for each $a \in M$ there is ${\cN}(a) \subseteq \{a\}\times \cN$ s.t. $| c[\cN(a)] | = 1$ and $\cN(a)\cong \cN$.
	Now, let us define a coloring $C: \cM \rightarrow \{\{0\},\{1\}\}$ as follow:
	$ C(a) := c[\cN(a)] $. From the previous statement it follows that $C$ is well-defined. So $C$ is a coloring of $\cM$ and since $\cM$ is indivisible, there is as $C$-monochromatic substructure $\cM_0 \subseteq \cM$ isomorphic to $\cM$.
	Let $\mathcal{A} \subseteq \cM[\cN]^s$ be the substructure
	\[ \bigcup_{a \in \cM_0} \cN(a).\]
	
	By its construction,  $\mathcal{A}$ is $c$-monochromatic. To show $A\cong \cM[\cN]^s$,
	let $f:\cM_0 \overset{\cong}{\to} \cM$ be an isomorphism and for every $a \in M_0$, let $g_a : \cN(a) \overset{\cong}{\to} \cN$ be an isomorphism. We define $F: \mathcal{A} \to \cM[\cN]^s$ by \[  F((a,b)) = \left(\ f(a),\ g_a((a,b))\ \right). \]
	
	We leave it to the reader to verify that $F$ is indeed an isomorphism.

\end{proof}
\begin{theorem}\label{compElemInd}
	If $\cM$ and $\cN$ are elementarily indivisible, then so are $\cM[\cN]$ and $\cM[\cN]^s$.
\end{theorem}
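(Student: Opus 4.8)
The plan is to deduce the theorem from the quantifier-elimination machinery of this section together with Lemma~\ref{TMI}, by passing to Morleyzations. First I would replace $\cM$ and $\cN$ by $\widehat{\cM}$ and $\widehat{\cN}$: since $\cM$ and $\cN$ are elementarily indivisible, Lemma~\ref{TMI} tells us that $\widehat{\cM}$ and $\widehat{\cN}$ are both indivisible and elementarily indivisible, and by the construction in Notation~\ref{defMorleyzation} they admit QE. Being elementarily indivisible, $\widehat{\cM}$ has transitive theory by Lemma~\ref{elemTrans}.

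Next I would apply the ``in particular'' clause of Theorem~\ref{compositionQE} to the pair $\widehat{\cM},\widehat{\cN}$ (they admit QE and $\Th(\widehat{\cM})$ is transitive) to conclude that $\widehat{\cM}[\widehat{\cN}]^s$ admits QE. Since $\widehat{\cM}$ and $\widehat{\cN}$ are indivisible, Proposition~\ref{compositionIndivisible} shows $\widehat{\cM}[\widehat{\cN}]^s$ is indivisible, and then Remark~\ref{QEmodelComplete} upgrades indivisibility plus QE to elementary indivisibility: $\widehat{\cM}[\widehat{\cN}]^s$ is elementarily indivisible.

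It then remains to identify $\cM[\cN]$ and $\cM[\cN]^s$ as language reducts of $\widehat{\cM}[\widehat{\cN}]^s$. These structures all have the same universe $M\times N$, and for each $R\in\cL$ the interpretation $R^{\widehat{\cM}[\widehat{\cN}]}$ is produced from $R^{\widehat{\cM}}=R^{\cM}$, $R^{\widehat{\cN}}=R^{\cN}$ and equality by exactly the recipe of Definition~\ref{defDefProduct}, so $R^{\widehat{\cM}[\widehat{\cN}]}=R^{\cM[\cN]}$; the relation $s$ is interpreted identically in both expansions. Hence $\cM[\cN]=\widehat{\cM}[\widehat{\cN}]^s\upharpoonright\cL$ and $\cM[\cN]^s=\widehat{\cM}[\widehat{\cN}]^s\upharpoonright(\cL\cup\{s\})$. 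As a language reduct of an elementarily indivisible structure is elementarily indivisible, both $\cM[\cN]$ and $\cM[\cN]^s$ are elementarily indivisible.

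I do not expect a genuine obstacle here; the real content is Theorem~\ref{compositionQE}, and everything else is assembly. The one point that needs a moment's care is this ``commutation'': that Morleyzing the factors and then forming the product gives back, after forgetting the new relation symbols and $s$, the original product. This holds precisely because the product's interpretation of a symbol $R\in\cL$ depends only on $R^{\cM}$, $R^{\cN}$ and equality, and Morleyization does not disturb the interpretations of symbols already in $\cL$.
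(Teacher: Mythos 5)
Your proposal is correct and follows essentially the same route as the paper: Morleyize both factors, use Lemma~\ref{TMI} and Lemma~\ref{elemTrans} to get indivisibility, QE and transitivity for $\widehat{\cM}$ and $\widehat{\cN}$, apply Proposition~\ref{compositionIndivisible} and Theorem~\ref{compositionQE} to get that $\widehat{\cM}[\widehat{\cN}]^s$ is indivisible with QE (hence elementarily indivisible), and conclude by observing that $\cM[\cN]$ and $\cM[\cN]^s$ are language reducts of $\widehat{\cM}[\widehat{\cN}]^s$. The ``commutation'' point you single out is exactly the justification the paper leaves implicit, and your treatment of it is accurate.
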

\begin{proof}
	
	First note that $\cM[\cN]$ is a reduct of $\cM[\cN]^s$, so it suffices to show elementary indivisibility only for $\cM[\cN]^s$.
	
	From the assumption and by Lemma \ref{TMI}, $\closure{\cM}$ and $\closure{\cN}$ are elementarily indivisible in $\closure{\cL}$, thus by Proposition \ref{compositionIndivisible}, $\closure{\cM}[\closure{\cN}]^s$ is indivisible in $\closure{\cL}$. By Lemma \ref{elemTrans}, $\Th\left(\closure{\cM}\right)$ is transitive and since $\closure{\cM}$ and $\closure{\cN}$ both admit QE in $\closure{\cL}$, by Theorem \ref{compositionQE},  $\closure{\cM}[\closure{\cN}]^s$ admits QE. In conclusion,
	$\closure{\cM}[\closure{\cN}]^s$ is indivisible  and admits QE, thus it is elementarily indivisible and $\scomp{\cM}{\cN}^s$ is a reduct of $\closure{\cM}[\closure{\cN}]^s$ to $\cL$.
\end{proof}

We observe that $\widehat{\cM}[\widehat{\cN}]^s$ is not the same structure as $\widehat {\cM[\cN]^s}$. Thus, Theorem \ref{compositionQE} does not automatically imply QE relative to $\cM$ and $\cN$, i.e., the QE assumption in statement of the theorem cannot be dropped. The following proposition remedies this situation.

\begin{proposition}
	Let $\cM, \cN$ be structures in a relational language such that $\Th(\cM)$ is transitive.
	If $\phi\left(\bar{v}\right)$ is any $\cL\cup\{s\}$-formula, then there are $\cL$-formulas $\Set{\vphi_1^j\left(\bar{v}\right), \vphi_2^j\left(\bar{v}\right)}_{j=1}^k$ such that  for every $\widebar{(a,b)}\in \scomp{\cM}{\cN}^s$:
	\[ \scomp{\cM}{\cN}^s \models \phi\left(\widebar{(a,b)}\right) \iff \bigvee_{j=1}^k \left( \cM\models \vphi_1^j\left(\bar{a}\right)\land \cN\models \vphi_2^j\left(\bar{b}\right) \right) \]
\end{proposition}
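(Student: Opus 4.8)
The plan is to prove the statement by induction on the structure of $\phi$. Call an $\cL\cup\{s\}$-formula $\psi(\bar v)$ \emph{good} if there are $\cL$-formulas $\{\vphi_1^j(\bar v),\vphi_2^j(\bar v)\}_{j=1}^{k}$ such that $\scomp{\cM}{\cN}^s\models\psi(\widebar{(a,b)})$ holds iff $\bigvee_{j=1}^k\big(\cM\models\vphi_1^j(\bar a)\ \text{and}\ \cN\models\vphi_2^j(\bar b)\big)$, for every $\widebar{(a,b)}$ in the product. Conjoining with $\bigwedge_i v_i=v_i$ we may always arrange that the free variables of each $\vphi_\ell^j$ are exactly $\bar v$, which makes the free-variable bookkeeping harmless under the operations below. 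I abbreviate ``$\cM\models\theta(\bar a)$'' by $(\theta)_\cM$ and ``$\cN\models\theta(\bar b)$'' by $(\theta)_\cN$, so ``good'' means $\psi\equiv\bigvee_j (\vphi_1^j)_\cM\wedge(\vphi_2^j)_\cN$ pointwise on the product. The goal is to show every $\cL\cup\{s\}$-formula is good.

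First the atoms. The atom $v_i=v_j$ is good, since $(a_i,b_i)=(a_j,b_j)$ iff $a_i=a_j$ and $b_i=b_j$: take $k=1$, $\vphi_1^1=\vphi_2^1=(v_i=v_j)$. The atom $s(v_i,v_j)$ is good, since it holds in the product iff $a_i=a_j$: take $\vphi_1^1=(v_i=v_j)$, $\vphi_2^1=(v_1=v_1)$. For an atom $R(v_{i_1},\dots,v_{i_m})$ with $R\in\cL$, Definition \ref{defGenProduct} splits on whether $a_{i_1},\dots,a_{i_m}$ are all equal: if so it holds iff $\cN\models R(b_{i_1},\dots,b_{i_m})$ (Lemma \ref{MNiffN}), and if not it holds iff $\cM\models R(a_{i_1},\dots,a_{i_m})$; so it is good with $k=2$ via $\vphi_1^1=\bigwedge_{l,l'}v_{i_l}=v_{i_{l'}}$, $\vphi_2^1=R(v_{i_1},\dots,v_{i_m})$, and $\vphi_1^2=\big(\bigvee_{l\neq l'}v_{i_l}\neq v_{i_{l'}}\big)\wedge R(v_{i_1},\dots,v_{i_m})$, $\vphi_2^2=(v_1=v_1)$. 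It is worth noting that transitivity is not actually used here: the ``$R(w,\dots,w)$'' phenomenon that makes transitivity indispensable in Theorem \ref{compositionQE} causes no trouble now, because we are allowed to keep $R$ as a formula evaluated directly in $\cN$.

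Now I check that the good formulas are closed under the logical operations. Closure under $\vee$ is immediate (concatenate the disjunctions). For $\wedge$: if $\psi\equiv\bigvee_j(\vphi_1^j)_\cM\wedge(\vphi_2^j)_\cN$ and $\psi'\equiv\bigvee_l(\eta_1^l)_\cM\wedge(\eta_2^l)_\cN$, then distributing gives $\psi\wedge\psi'\equiv\bigvee_{j,l}(\vphi_1^j\wedge\eta_1^l)_\cM\wedge(\vphi_2^j\wedge\eta_2^l)_\cN$. For $\neg$: $\neg\psi\equiv\bigwedge_j\big((\neg\vphi_1^j)_\cM\vee(\neg\vphi_2^j)_\cN\big)$, and distributing this conjunction over the disjunctions yields, for each choice of one disjunct per $j$, a term that regroups, writing $S$ for the set of indices at which the first disjunct was chosen, as $\big(\bigwedge_{j\in S}\neg\vphi_1^j\big)_\cM\wedge\big(\bigwedge_{j\notin S}\neg\vphi_2^j\big)_\cN$; hence $\neg\psi$ is good. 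Finally, closure under $\exists$ is the step where the shape of the product is used: if $\chi(\bar v,w)$ is good then, since the universe of $\scomp{\cM}{\cN}^s$ is \emph{exactly} $M\times N$, we have $\scomp{\cM}{\cN}^s\models\exists w\,\chi(\widebar{(a,b)})$ iff there exist $c\in M$ and $d\in N$ with $\bigvee_j\big(\cM\models\vphi_1^j(\bar a,c)\ \text{and}\ \cN\models\vphi_2^j(\bar b,d)\big)$; as $c$ and $d$ vary independently this is equivalent to $\bigvee_j\big(\cM\models\exists w\,\vphi_1^j(\bar a,w)\ \text{and}\ \cN\models\exists w\,\vphi_2^j(\bar b,w)\big)$, so $\exists w\,\chi$ is good, and $\forall w\,\chi$ is good via $\neg\exists w\,\neg$. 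An induction on the complexity of $\phi$ then completes the proof.

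I expect the two mildly delicate points to be: (i) the negation step, where one must check that the distributed terms really regroup into a single $\cM$-conjunct and a single $\cN$-conjunct — this works precisely because within any one disjunct of a good formula the first-coordinate data and the second-coordinate data never interact; and (ii) the $\exists$ step, whose validity hinges on every pair $(c,d)\in M\times N$ being a genuine element of the product, so that an existential over the product factors into independent existentials over $M$ and over $N$ — this would fail for the \emph{generalized} product, which is why the proposition is restricted to $\scomp{\cM}{\cN}^s$. A less self-contained alternative is to pass to the Morleyzations $\closure{\cM},\closure{\cN}$: $\Th(\closure{\cM})$ is again transitive (every $\closure{\cL}$-formula is $\closure{\cM}$-equivalent to an $\cL$-formula, and $\closure{\cM}\upharpoonright\cL=\cM$), so Theorem \ref{compositionQE} gives that $\closure{\cM}[\closure{\cN}]^s$ admits QE; one then observes that $\scomp{\cM}{\cN}^s$ is the $\cL\cup\{s\}$-reduct of $\closure{\cM}[\closure{\cN}]^s$, puts the quantifier-free $\closure{\cL}\cup\{s\}$-equivalent of $\phi$ into disjunctive normal form, and unfolds each new symbol $R_\alpha$ back to $\alpha$ exactly via the atom computation above — this version does use transitivity, through Theorem \ref{compositionQE}.
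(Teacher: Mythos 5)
Your proof is correct, but it takes a genuinely different route from the paper's. The paper deduces the proposition from machinery already in place: it passes to the Morleyzations, notes that $\scomp{\cM}{\cN}^s$ is the $\cL\cup\{s\}$-reduct of $\scomp{\widehat{\cM}}{\widehat{\cN}}^s$, invokes Theorem \ref{compositionQE} (this is where transitivity enters) to replace $\phi$ by a quantifier-free $\widehat{\cL}\cup\{s\}$-formula, puts that in DNF conjuncted with complete $s$-diagrams, splits each disjunct into an $\widehat{\cM}$-part and an $\widehat{\cN}$-part as in the proof of that theorem, and finally pulls the two parts back to $\cL$-formulas via the definitional reducts --- essentially the ``less self-contained alternative'' you sketch at the end. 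Your main argument is instead a self-contained Feferman--Vaught-style induction on the complexity of $\phi$, and its steps all check out: the atom case matches Definition \ref{defDefProduct} exactly (including repeated variables), the regrouping in the negation step is valid because each disjunct separates the $\cM$-data from the $\cN$-data, and the existential step factors correctly because the universe of $\scomp{\cM}{\cN}^s$ is the full Cartesian product $M\times N$ with both factors nonempty --- which, as you note, is precisely what breaks for the generalized product. A noteworthy byproduct, which you identify correctly, is that your argument never uses transitivity of $\Th(\cM)$: transitivity is needed in the paper only because its route goes through Theorem \ref{compositionQE}, where an $\cM$-condition must be re-expressed inside the product (Lemma \ref{MNiffMextended}), whereas your target form evaluates formulas directly in $\cM$ and in $\cN$. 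So your proof establishes the proposition under strictly weaker hypotheses; what the paper's route buys in exchange is brevity, given that Theorem \ref{compositionQE} and the Morleyzation technique are already developed, while yours buys elementarity and the sharper understanding of where transitivity is (and is not) really used.
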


\begin{proof}
	Since $\scomp{\cM}{\cN}^s = \scomp{\widehat{\cM}}{\widehat{\cN}}^s \upharpoonright \cL \cup \{s\}$ and since $\scomp{\widehat{\cM}}{\widehat{\cN}}^s$ admits QE, there is a quantifier-free $\widehat{\cL}\cup \{s\}$-formula $\vphi\left(\bar{v}\right)$ such that 
	\[\scomp{\cM}{\cN}^s\models \phi\left(\widebar{(a,b)}\right) \iff \scomp{\widehat{\cM}}{\widehat{\cN}}^s \models \phi\left(\widebar{(a,b)}\right) \iff 
	\scomp{\widehat{\cM}}{\widehat{\cN}}^s \models \vphi\left(\widebar{(a,b)}\right) \]
	for every $\widebar{(a,b)}\in \scomp{\cM}{\cN}^s$.
	By taking the disjunctive normal form (DNF) of $\vphi\left(\bar{v}\right)$, conjuncting with the disjunction with all complete $s$-diagrams and using disjunctions, we may assume $\vphi\left(\bar{v}\right)$ is of the form $\widetilde{\psi}\left(\bar{v}\right) \land \bigwedge_{i\in I} \theta_i\left(\bar{v}\right)$
	where $\theta_i$ are atomic and negated atomic formulas.
	As in the proof of Theorem \ref{compositionQE}, there are quantifier-free $\widehat{\cL}$-formulas $\widehat{\vphi_1}\left(\bar{v}\right)$ and $\widehat{\vphi_2}\left(\bar{v}\right)$ such that
	\[ \scomp{\widehat{\cM}}{\widehat{\cN}}^s \models \vphi\left(\widebar{(a,b)}\right) \iff \widehat{\cM}\models \widehat{\vphi_1}\left(\bar{a}\right)\text{ and } \widehat{\cN}\models \widehat{\vphi_2}\left(\bar{b}\right)  \] for every $\widebar{(a,b)}\in \scomp{\cM}{\cN}^s$. Since $\widehat{\cM}$ and $\widehat{\cN}$ are reducts of $\cM$ and $\cN$ respectively, there are $\cL$-formulas $\vphi_1\left(\bar{v}\right)$ and $\vphi_2\left(\bar{v}\right)$ such that 
	\[  \widehat{\cM}\models \widehat{\vphi_1}\left(\bar{a}\right) \iff \cM\models\vphi_1\left(\bar{a}\right) \text{ and } \widehat{\cN}\models \widehat{\vphi_2}\left(\bar{b}\right) \iff 
	\cN\models \vphi_2\left(\bar{b}\right)  \]
	for every $\widebar{\left(a,b\right)}\in \scomp{\cM}{\cN}^s$.
\end{proof}

\section{The existence of non-transitive elementarily indivisible structures}\label{sectionElemNonTrans}

 In this section, we give a construction for non-transitive elementarily indivisible structures. Noting that every elementarily indivisible homogeneous structure is transitive, this gives a negative answer to Question \ref{q:elemHom}. In Subsection \ref{twoOrbits} we prove the main result of this section and in Subsection \ref{subsec:infOrb}, we generalize this result by constructing elementarily indivisible structures with infinitely many orbits. The generalization will be used in Section \ref{elemNonSym}.

\subsection{Two orbits}\label{twoOrbits}
\begin{definition}
	Let $\cL$ be a relational language, an elementarily indivisible pair in $\cL$ is a pair of elementarily indivisible $\cL$-structures $\langle \cM_0, \cM_1 \rangle$ such that $\cM_0 \prec \cM_1$ and $\cM_0\not \cong \cM_1$.
\end{definition}
The existence of such a pair is needed for our construction, and thus we hereby present two key examples. While the first one is simpler, we give the second as an example in a finite language. The following examples rely on the theory of Fra\"iss\'e, developed in \cite{Fra54} and outlined in detail in \cite[Section 7.1]{Hodges93}.

\begin{example}\label{exampleElemPair}
	Let $n\geq 2$ (it wouldn't harm to assume $n=2$).
	Let $\cL = \{R_i\}_{i\in \omega}$ where all $R_i$ are relation symbols of arity $n$.

	Let $\cC$ be the class of all finite $n$-uniform hypergraphs (for $n=2$ this is simply graphs) with edges colored in $\omega$-many colors, i.e. all finite $\cL$-structures for which each $R_i$ form the edges of an $n$-uniform hypergraph and $\{R_i\}_{i\in \omega}$ are disjoint. This is a Fra\"iss\'e class and let $\cM_1$ be its Fra\"iss\'e limit.
	
	Let $\cD\subset \cC$ be the class of all structures in $\cC$ which are \emph{complete}, i.e. satisfying the property that every subset of size $n$ form an edge (in some color $c\in \omega$). Note that $\cD$ is not an elementary class, but it is a Fra\"iss\'e subclass of $\cC$. Let $\cM_0$ be its Fra\"iss\'e limit. 
	
	By universality, $\cM_0$ embeds into $\cM_1$, so we may assume $\cM_0\subset\cM_1$. It is well known and easy to verify that $\cM_1\equiv\cM_0$ and they admit QE, so $\cM_0\prec \cM_1$ and since $\age\left(\cM_0\right)\subsetneq \age\left(\cM_1\right)$, $\cM_0\not \cong \cM_1$. 
	
	Indivisibility of both $\cM_0$ and $\cM_1$ can be shown either by generalizing the well known proof of \emph{indivisibility} (in fact of  \emph{the pigeonhole property}) of the random graph (given in  \cite[Corollary 1.5]{Hen71}) or by generalizing the proof of \cite[Example 6.3]{HKO11} (the proof for $\cD$ and $n=2$ is given, for $\cC$ and $n\geq 2$ the proof is exactly the same). \emph{Elementary indivisibility} thus follows from QE and Remark \ref{QEmodelComplete}.
\end{example}

\begin{example}\label{exampleElemPairFin}
	Let $\cL:=\{R_i\}_{i\in\omega}$ and let $\cM:=\cM_1$ be as in Example \ref{exampleElemPair}. Let $f:\cM^n\to \omega$ be such that $f\left(\bar{a}\right) = i$ iff $\cM\models R_i\left(\bar{a}\right)$. Let $S$ be a binary relation on $\omega$. Let  $R^S$ be a $2n$-ary relation symbol and let $\cM^S$ be the $\{R^S\}$-structure with the same universe as $\cM$ such that whenever $\bar{a},\bar{b}\in \cM^n$ are $n$-tuples, $\cM^S\models R^S \left(\bar{a},\bar{b}\right)$ iff $S\left(f\left(\bar{a}\right), f\left(\bar{b}\right)\right)$.
	
	Consider the case $S$ is the standard order on $\omega$. Notice that the set of $\{R^<\}$-formulas
	 \[\Delta:=\Set{\exists \bar{y}_1,\dots, \bar{y}_n \bigwedge_{i=1}^{n-1} R^<\left(\bar{y}_i,\bar{y}_{i+1}\right) | n\in \omega}\] is finitely satisfiable in $\cM^<$, but not realized in $\cM^<$. Let $\cM^<\prec \cN^<$ be a countable elementary extension realizing $\Delta$ and let $N$ be its universe.
	
	We would like to show both $\cM^<$ and $\cN^<$ are elementarily indivisible.
	 Let $E$ be the equivalence relation  on $N^n$ defined by $E\left(\bar{a};\bar{b}\right)$ iff $\cN^<\models \neg R^<\left(\bar{a},\bar{b}\right) \land \neg R^<\left(\bar{b},\bar{a}\right)$. Since $N$ is countable, we may assume $[N]^n/E = \omega$ and let $\pi:[N]^n\to \omega$ be the quotient map sending each $A\in [N]^n$ to its $E$-equivalence class. Let $\cN$ be the $\cL$-structure whose universe is $N$ such that $\cN\models R_i\left(a_1,\dots, a_n\right)$ iff $\pi\left(\Set{a_1,\dots, a_n}\right) = i$ for every $a_1,\dots, a_n\in N$. Since $\cM^<\prec \cN^<$, in particular, $\cN^<$ satisfies the following property:
	 \begin{enumerate}[label=(\Alph*)]
	 	\item for every finite $A\subset N$ and every $f:[A]^{n-1}\to N^n$, there is some $b\in N$ such that
	 	$ \cN\models E\left(\bar{a},b; f\left(\bar{a}\right)\right) $
	 	for every $\bar{a}\in A^{n-1} $.
	 \end{enumerate}  By property (A), $\cN$ is ultrahomogeneous and has the same age as $\cM$, so $\cN\cong \cM$ and thus $\cN^<\cong\cM^{\vartriangleleft}$ for some non-standard order $\vartriangleleft$ on $\omega$.
	  
	  Finally, to prove $\cM^<$ and $\cM^{\vartriangleleft}$ are both elementarily indivisible, we will show that $\cM^S$ is elementarily indivisible for every binary relation $S$ on $\omega$. For that, let 
	  $\widetilde{\cM^S}$ be $\cM$ expanded by $R^S$ defined above. Notice that $R^S$ is definable in $\cL_{\omega_1,\omega}$, so clearly $\widetilde{\cM^S}$ is indivisible. By Remark \ref{indivisibleHom}, since $\cM$ is ultrahomogeneous, so is $\widetilde{\cM^S}$ and thus $\widetilde{\cM^S}$ is elementarily indivisible. Finally $\cM^S = \widetilde{\cM^S}\upharpoonright \{R^S\}$.
	  \qed  
\end{example}

\begin{definition}
For $\cL$-structures $\cM$ and $\cN$, we denote  $\cM \sim_e \cN$ if both $\cM$ can be elementarily embedded in $\cN$ and vice-versa.
\end{definition}
\begin{lemma}\label{elemEquivResult}
	If $\cM \sim_e \cN$  then
	$\cM$ is elementarily indivisible iff $\cN$ is elementarily indivisible.
\end{lemma}
\begin{proof}
	Because $ \sim_{e} $ is an equivalence relation, it suffices to show one direction. Suppose $\cM$ is elementarily indivisible and assume, without loss of generality,  $\cM\preceq \cN$.
	Let $ c:\cN\rightarrow\{\text{red},\text{blue}\} $ be a coloring of $\cN$, so $c$ naturally induces a coloring of $\cM$. Since $\cM$ is elementarily indivisible, there is a $c$-monochromatic $\cM'\prec\cM$ such that $\cM'\cong \cM$.
	Now, since $\cN$ can be elementarily embedded in $\cM$ and $\cM'\cong\cM$, in particular, there is some $\cN_0\prec \cM'$ such that $\cN_0\cong\cN$ and since $\cM'$ is monochromatic, so is $\cN_0$.
\end{proof}

\begin{lemma}\label{automorphisInduces}
	Let $\cM$, $\{\cN_a\}_{a\in \cM}$ be $\cL$-structures. For every $\left(a_1,b_1\right),\left(a_2,b_2\right)\in \gcompMN^s$, if there is an automorphism $\sigma\in \aut{\gcompMN^s}$ such that $\sigma\left(\left(a_1,b_1\right)\right)=\left(a_2,b_2\right)$, then $\cN_{a_1}\cong \cN_{a_2}$.
\end{lemma}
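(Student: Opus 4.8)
The plan is to use the extra relation $s$ to recover the fibers of the generalized product and then transport one fiber onto another via $\sigma$. First I would observe, directly from Definition~\ref{defGenProduct}, that $s^{\gcompMN^s}$ is an equivalence relation on $\bigcup_{a\in M}\{a\}\times N_a$ whose classes are exactly the fibers $\{a\}\times N_a$, $a\in M$, and that its restriction to each such fiber is the full binary relation on that fiber. Since automorphisms preserve $s$, the automorphism $\sigma$ permutes the $s$-classes; in particular it maps the class of $(a_1,b_1)$, namely $\{a_1\}\times N_{a_1}$, onto the class of $\sigma(a_1,b_1)=(a_2,b_2)$, namely $\{a_2\}\times N_{a_2}$. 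Thus $\sigma$ restricts to a bijection $\sigma_0\colon\{a_1\}\times N_{a_1}\to\{a_2\}\times N_{a_2}$.

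Next I would invoke the standard fact that the restriction of an automorphism of a relational structure to an arbitrary subset of the universe is an isomorphism between the corresponding induced substructures; applied to $\sigma$ and the subset $\{a_1\}\times N_{a_1}$, this makes $\sigma_0$ an isomorphism of the induced $\cL\cup\{s\}$-substructures carried by $\{a_1\}\times N_{a_1}$ and $\{a_2\}\times N_{a_2}$. Because $s$ is interpreted as the full relation on each of these two subsets, being an $\cL\cup\{s\}$-isomorphism between them is the same as being an $\cL$-isomorphism, so $\sigma_0$ is in particular an $\cL$-isomorphism of the two fibers.

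Finally, by the remark immediately following Definition~\ref{defGenProduct} (or, concretely, via the $\cL$-embedding $e_a\colon\cN_a\to\gcompMN^s$, $e_a(b)=(a,b)$, exhibited in the proof of Lemma~\ref{MNiffN}), the fiber $\{a\}\times\cN_a$ is, as an $\cL$-structure, isomorphic to $\cN_a$. Chaining these identifications with $\sigma_0$ yields $\cN_{a_1}\cong\{a_1\}\times\cN_{a_1}\cong\{a_2\}\times\cN_{a_2}\cong\cN_{a_2}$, where the middle isomorphism is $\sigma_0$. I do not expect a serious obstacle here: the only points needing a (routine) check are that the $s$-classes coincide with the fibers and that restricting an automorphism to a subset gives an isomorphism of induced substructures, and the presence of $s$ is precisely what makes the fibers first-order recoverable and hence respected by $\sigma$.
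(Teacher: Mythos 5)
Your argument is correct and is essentially the paper's own proof: identify the $s$-classes with the fibers $\{a\}\times\cN_a$, note that $\sigma$ permutes them and hence restricts to an isomorphism from $\{a_1\}\times\cN_{a_1}$ onto $\{a_2\}\times\cN_{a_2}$, and conclude via $\{a_i\}\times\cN_{a_i}\cong\cN_{a_i}$. No gaps; the extra remark that $s$ is full on each fiber (so $\cL\cup\{s\}$- and $\cL$-isomorphism coincide there) is a harmless elaboration.
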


\begin{proof}
	$\sigma$ sends $s$-equivalence classes to $s$-equivalence classes and $\{a\}\times \cN_a$ is an $s$-equivalence class for every $a\in M$. Therefore $\sigma[\{a_1\}\times \cN_{a_1}] = \{a_2\}\times \cN_{a_2}$,
	so $\sigma \upharpoonright \{a_1\}\times \cN_{a_1}: \{a_1\}\times \cN_{a_1} \to \{a_2\}\times \cN_{a_2}$ is an isomorphism, but $\{a_1\}\times \cN_{a_1}\cong \cN_{a_1}$ and $\{a_2\}\times \cN_{a_2}\cong \cN_{a_2}$.
\end{proof}

\begin{theorem}\label{elemNontransitive}
	Let $\cM$ be a transitive elementarily indivisible structure and $\langle \cN_0, \cN_1\rangle$ an elementarily indivisible pair. Let $\cM'\subseteq \cM_1\subset \cM$ be such that $\cM'\cong \cM$ and $\cM'\prec \cM$ and let
	\[\cN_a:= \twopartdef{\cN_1}{a\in \cM_1}{\cN_0}{a\notin \cM_1.} \]
	Then the generalized product $\gcompMN^s$ is elementarily indivisible and is not transitive.
\end{theorem}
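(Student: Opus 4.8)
The plan is to establish the two assertions separately; non-transitivity is immediate, and all of the work is in elementary indivisibility, for which I would invoke the quantifier-elimination machinery of Section~\ref{sectionQE}.

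\emph{Non-transitivity.} Pick $a_1\in M_1$ (for instance any element of $M'$) and $a_2\in M\setminus M_1$ (which is nonempty since $\cM_1\subsetneq\cM$), and choose $b_i\in N_{a_i}$ arbitrarily. If some automorphism of $\gcompMN^s$ carried $(a_1,b_1)$ to $(a_2,b_2)$, then Lemma~\ref{automorphisInduces} would force $\cN_1=\cN_{a_1}\cong\cN_{a_2}=\cN_0$, contradicting the definition of an elementarily indivisible pair. Hence these points lie in distinct orbits.

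\emph{Reduction for elementary indivisibility.} I would first pass to Morleyzations. By Lemma~\ref{TMI}, $\closure{\cM},\closure{\cN_0},\closure{\cN_1}$ are elementarily indivisible $\closure{\cL}$-structures admitting QE; $\Th(\closure{\cM})$ is transitive by Lemma~\ref{elemTrans}; $\closure{\cN_0}\prec\closure{\cN_1}$, since $\cN_0\prec\cN_1$ gives $\closure{\cN_0}\subseteq\closure{\cN_1}$ and both model the QE-theory $\Th(\closure{\cN_1})$ (Remark~\ref{QEmodelComplete}), while $\closure{\cN_0}\not\cong\closure{\cN_1}$ (an isomorphism would restrict to one of the $\cL$-reducts); and, crucially, since $\cM'\preceq\cM$ the restriction of $\closure{\cM}$ to $M'$ equals the Morleyzation of $\cM'$, hence is isomorphic to $\closure{\cM}$ and contained in $M_1$. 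Setting $\closure{\cN_a}:=\closure{\cN_1}$ for $a\in M_1$ and $\closure{\cN_0}$ otherwise, the structure $\scomp{\closure{\cM}}{\closure{\cN_a}}_{a\in M}^s$ has the same universe as $\gcompMN^s$ and the same $\cL\cup\{s\}$-reduct, so, as elementary indivisibility passes to language reducts, it suffices to prove $\scomp{\closure{\cM}}{\closure{\cN_a}}_{a\in M}^s$ is elementarily indivisible; by Theorem~\ref{compositionQE} (with $T_1=\Th(\closure{\cM})$ and $T_2=\Th(\closure{\cN_0})$) it admits QE, so by Remark~\ref{QEmodelComplete} it is enough to prove it is plainly indivisible. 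Renaming, I may therefore assume $\cM,\cN_0,\cN_1$ admit QE and that $\cM$ has a substructure $B_0\cong\cM$ with $B_0\subseteq M_1$, and I must show $P:=\gcompMN^s$ is indivisible.

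\emph{Indivisibility of $P$.} Fix a two-colouring $c$ of $P$. For each $a\in M_1$ the fibre $\{a\}\times\cN_1$ is an induced substructure of $P$ isomorphic to $\cN_1$, which is indivisible, so I fix a $c$-monochromatic $E(a)\subseteq\{a\}\times\cN_1$ with $E(a)\cong\cN_1$ and colour $C(a)$; since $\cN_0\prec\cN_1$, $E(a)$ contains a copy of $\cN_0$, still monochromatic of colour $C(a)$. Reading $C$ as a colouring of $M_1\supseteq B_0$ and using indivisibility of $B_0\cong\cM$, I fix a $C$-monochromatic $B\subseteq B_0$ with $B\cong\cM$ via some isomorphism $g$, of colour $\epsilon$; since $B\subseteq M_1$, every fibre over $B$ is $\{a\}\times\cN_1$ and $E(a)$ is defined for each $a\in B$. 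Now put $\cA_a:=E(a)$ when $g(a)\in M_1$ and $\cA_a$ a copy of $\cN_0$ inside $E(a)$ when $g(a)\notin M_1$, and let $\cA:=\bigcup_{a\in B}\cA_a\subseteq P$; distinct $\cA_a$ lie in distinct $s$-classes and each is monochromatic of colour $\epsilon$, so $\cA$ is monochromatic. A direct check from Definition~\ref{defGenProduct} identifies the induced substructure $\cA$ with the generalized product $\scomp{B}{\cA_a}_{a\in B}^s$; and since $g\colon B\cong\cM$ while $\cA_a\cong\cN_{g(a)}$ for every $a\in B$ (namely $\cN_1$ when $g(a)\in M_1$ and $\cN_0$ when $g(a)\notin M_1$), the map $(a,b)\mapsto(g(a),h_a(b))$ with $h_a\colon\cA_a\cong\cN_{g(a)}$ is an isomorphism $\cA\cong P$. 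Thus $P$ is indivisible, hence (having QE) elementarily indivisible, and so is its language reduct $\gcompMN^s$.

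\emph{The main obstacle.} The one genuinely delicate step — and the one the hypotheses are tailored to — is the choice of the base copy $B$. A naive imitation of Proposition~\ref{compositionIndivisible} would need the monochromatic copy $\cM_0\cong\cM$ of the base to carry $\cM_0\cap M_1$ onto $M_1$, since over a point whose $g$-image lies outside $M_1$ one can only place a copy of $\cN_0$, whereas over a point whose $g$-image lies in $M_1$ one must place $\cN_1$, which does not embed in the fibre $\cN_0$; and there is no way to force an arbitrary monochromatic copy of $\cM$ to respect the unnamed partition $M=M_1\sqcup(M\setminus M_1)$. Confining $B$ to $M'\subseteq M_1$ circumvents this: over every point of $B$ the fibre is the ``large'' $\cN_1$, into which either $\cN_1$ or $\cN_0$ can be inserted as $g$ dictates. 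This is precisely what $\cM'\cong\cM$ together with $\cM'\subseteq\cM_1$ provides, and $\cM'\preceq\cM$ (rather than mere isomorphism onto a substructure) is what keeps the restriction of $\closure{\cM}$ to $M'$ itself a Morleyzation, so that the quantifier-elimination step survives the passage to $\closure{\cL}$.
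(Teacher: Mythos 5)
Your proof is correct, but the main part runs along a genuinely different track than the paper's. For elementary indivisibility the paper also first reduces to the QE setting via Morleyzation, but then it never re-does any colouring combinatorics: it shows $\gcompMN^s \sim_e \scomp{\cM}{\cN_1}^s$ (the copy $\scomp{\cM'}{\cN_1}^s\subset\gcompMN^s$ gives one embedding, the inclusion $\gcompMN^s\subset\scomp{\cM}{\cN_1}^s$ the other, and both are elementary by the QE theory $T$ of Theorem \ref{compositionQE}), and then quotes Theorem \ref{compElemInd} for $\scomp{\cM}{\cN_1}^s$ together with Lemma \ref{elemEquivResult}. You instead prove indivisibility of the generalized product directly, by adapting the colouring argument of Proposition \ref{compositionIndivisible}: choosing monochromatic fibres $E(a)\cong\cN_1$ over $M_1$, running the induced colouring on the base copy $B_0\subseteq M_1$, and using $\cN_0\hookrightarrow\cN_1$ to plant whichever fibre $\cN_{g(a)}$ is required inside $E(a)$; QE (Theorem \ref{compositionQE} plus Remark \ref{QEmodelComplete}) then upgrades this to elementary indivisibility. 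The paper's route is shorter because it reuses Theorem \ref{compElemInd} and the transfer lemma; yours is more self-contained, extends Proposition \ref{compositionIndivisible} to the generalized product, and makes visible exactly where $\cM'\subseteq\cM_1$ and $\cN_0\prec\cN_1$ enter the combinatorics (and, as you note, where $\cM'\prec\cM$ is really used, namely only in the Morleyzation reduction — the same point at which the paper needs it). You also spell out non-transitivity via Lemma \ref{automorphisInduces}, which the paper's proof leaves implicit.
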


\begin{proof}
	To prove $\gcompMN^s$ is elementarily indivisible, we may assume that $\cM, \cN_0, \cN_1$ all admit QE. If not, by looking at $\widehat{\cM}, \widehat{\cN_0}, \widehat{\cN_1} $, the assumptions remain true and we can, assuming QE, prove that $\widehat{\cM}[\widehat{\cN_a}]^s_{a\in \cM}$ is elementarily indivisible, so $\gcompMN^s$ is also elementarily indivisible, as a reduct of such.  
	
	We will show that $\gcompMN^s \sim_e \scomp{\cM}{\cN_1}^s$. By Theorem \ref{compElemInd}, $\scomp{\cM}{\cN_1}^s$ is elementarily indivisible, thus by Lemma \ref{elemEquivResult}, $\gcompMN^s$ will also be elementarily indivisible.
	
	Clearly $\scomp{\cM}{\cN_1}^s\cong \scomp{\cM'}{\cN_1}^s$ and $\scomp{\cM'}{\cN_1}^s\subset \gcompMN^s$, so there is an embedding $e_1:\scomp{\cM}{\cN_1}^s \into \gcompMN^s$; on the other hand, clearly $\gcompMN^s\subset \scomp{\cM}{\cN_1}^s$, so we have an embedding $e_2:\gcompMN^s\into \scomp{\cM}{\cN_1}^s$.
	
	Now by QE of $\cM$ and of $\cN_0 \equiv \cN_1$ and by Theorem \ref{compositionQE}, there is an $\cL \cup \{s\}$-theory $T$ admitting QE, such that $\scomp{\cM}{\cN_1}^s, \gcompMN^s \models T$. By QE of $T$, $e_1$ and $e_2$ are elementary embeddings.
\end{proof}

\begin{corollary}
	There is an elementarily indivisible structure (in a finite language) that is not transitive and not homogeneous.
\end{corollary}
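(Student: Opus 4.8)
The plan is to read the Corollary off almost immediately from two results already established: Theorem \ref{elemNontransitive}, which manufactures an elementarily indivisible structure that is \emph{not} transitive, and Corollary \ref{ElemHomIsTrans}, which says that a homogeneous elementarily indivisible structure must be transitive. So the only real work is to check that the hypotheses of Theorem \ref{elemNontransitive} can actually be met, and then to argue non-homogeneity by contraposition.

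First I would fix the language $\cL=\{R_i\}_{i\in\omega}$ and the structures $\cN_0\prec\cN_1$ produced in Example \ref{exampleElemPair}, so that $\langle\cN_0,\cN_1\rangle$ is an elementarily indivisible pair (in particular $\cN_0\not\cong\cN_1$). I would then take $\cM:=\cN_1$, which is a transitive (indeed ultrahomogeneous) elementarily indivisible $\cL$-structure. The one point that genuinely needs an argument is that $\cM=\cN_1$ admits a proper elementary substructure $\cM'$ with $\cM'\cong\cM$: deleting a single vertex from $\cN_1$ yields an $\cL$-structure that still satisfies conditions (1),(2) and the extension property (A) of Example \ref{exampleElemPair} — a routine verification using the genericity of the Fra\"iss\'e limit — hence, by uniqueness of the limit, it is isomorphic to $\cN_1$, and since $\cN_1$ admits QE this copy is an elementary substructure. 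Setting $\cM_1:=\cM'$, all of $\cM'\subseteq\cM_1\subset\cM$, $\cM'\cong\cM$, $\cM'\prec\cM$ hold, so Theorem \ref{elemNontransitive} applies.

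Applying Theorem \ref{elemNontransitive} with this data (so $\cN_a=\cN_1$ for $a\in\cM_1$ and $\cN_a=\cN_0$ otherwise), I obtain the generalized product $\cA:=\gcompMN^s$, which is elementarily indivisible and not transitive. Finally I would invoke Corollary \ref{ElemHomIsTrans} in contrapositive form: if $\cA$ were homogeneous, then being also elementarily indivisible it would be transitive, contradicting Theorem \ref{elemNontransitive}. Hence $\cA$ is elementarily indivisible, not transitive, and not homogeneous, which is exactly the statement.

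I do not expect a genuine obstacle here; the Corollary is essentially the conjunction of Theorem \ref{elemNontransitive} with Corollary \ref{ElemHomIsTrans}. The only step deserving more than a sentence is the existence of a proper self-isomorphic elementary substructure of the chosen transitive elementarily indivisible structure, and even that is standard for Fra\"iss\'e limits with quantifier elimination; one could alternatively phrase it abstractly as the requirement that such a $\cM$, $\cM_1$, $\cM'$ exist, which the excerpt's running examples already supply.
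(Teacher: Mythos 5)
Your proposal is correct and follows essentially the same route as the paper: apply Theorem \ref{elemNontransitive} to a transitive elementarily indivisible $\cM$ together with the pair from Example \ref{exampleElemPair}, then deduce non-homogeneity from Corollary \ref{ElemHomIsTrans}. The only (harmless) difference is how the proper elementary self-copy $\cM'\prec\cM$, $\cM'\cong\cM$ inside a proper $\cM_1\subset\cM$ is obtained: you verify it by hand for the Fra\"iss\'e limit $\cN_1$ via vertex deletion, whereas the paper notes it comes for free from elementary indivisibility of an arbitrary transitive elementarily indivisible $\cM$ (color one point one color and the rest the other), so no Fra\"iss\'e-specific verification is needed.
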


\begin{proof}
	Let $\cM$ be any transitive elementarily indivisible structure (all the classic examples in the introduction are), let $\langle\cN_0, \cN_1\rangle$ be an elementarily indivisible pair (in a finite language), e.g. Example \ref{exampleElemPairFin}. Let $\cM_1\subset \cM$ be such that there is some $\cM'\subseteq \cM_1$ satisfying $\cM'\cong \cM$ and $\cM'\prec \cM$ (by elementary indivisibility, there are $2^{\aleph_0}$ such but it does not harm to assume $\cM_1$ is co-finite) and let \[\cN_a:= \twopartdef{\cN_1}{a\in \cM_1}{\cN_0}{a\notin \cM_1.} \]
	By Theorem \ref{elemNontransitive}, $\gcompMN^s$ is elementarily indivisible and not transitive. By Corollary \ref{ElemHomIsTrans}, $\gcompMN^s$ is not homogeneous.
\end{proof}
\bigskip

\subsection{Infinitely many orbits}\label{subsec:infOrb}
In this subsection, we generalize the result from Subsection \ref{twoOrbits} and prove the existence of an elementarily indivisible structure with infinitely many orbits. We will use such a structure in Section \ref{elemNonSym}. Here, by an \emph{orbit} of a structure, we mean an orbit of the action of its automorphism group on its universe, i.e:

\begin{definition}
Let $\cM$ be an $\cL$ structure with universe $M$. An \emph{orbit} of $\cM$ is a set of the form $\Set{\sigma(a) | \sigma\in \aut(\cM)}$
where $a \in \cM$ (i.e. $a$ is a singleton).
\end{definition}

For the construction, we need  the following.

\begin{lemma}\label{infChain}
	There is an infinite set of elementarily indivisible pairwise-non-isomorphic structures $\{\cA_i\}_{i\in\omega}$, such that $\cA_i\sim_e \cA_j$ for all $i,j\in \omega$. Furthermore, $\{\cA_i\}_{i\in\omega}$ can be chosen to be in a finite language.
\end{lemma}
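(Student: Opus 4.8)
\textbf{Proof plan for Lemma \ref{infChain}.}

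The plan is to build the $\cA_i$ using the same lexicographic-product machinery already developed, specifically by iterating Theorem \ref{elemNontransitive}. Start with the elementarily indivisible pair $\langle \cN_0, \cN_1\rangle$ constructed in Example \ref{exampleElemPair} (the Fra\"iss\'e limits of complete and of arbitrary $\omega$-colored $n$-hypergraphs), and fix a transitive elementarily indivisible base $\cM$ — say $\langle \mathbb{Q}, < \rangle$, which has abundant proper elementary substructures isomorphic to itself. The idea is that the generalized product $\cA := \gcompMN^s$ from Theorem \ref{elemNontransitive}, built from $\cM$ with fibers $\cN_1$ over a co-infinite elementary copy $\cM_1$ and $\cN_0$ elsewhere, is elementarily indivisible and $\cA \sim_e \scomp{\cM}{\cN_1}^s$. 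I would then use the freedom in choosing the co-infinite set of indices carrying the ``big'' fiber $\cN_1$: for each $i\in\omega$ let $\cM_1^{(i)}\subset\cM$ be a proper elementary substructure whose complement in $M$ has exactly $i$ elements (or, more robustly, arrange that the isomorphism type of the product genuinely depends on $i$), and let $\cA_i := \cM[\cN_a^{(i)}]^s_{a\in M}$ where $\cN_a^{(i)}=\cN_1$ if $a\in M_1^{(i)}$ and $\cN_a^{(i)}=\cN_0$ otherwise.

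The three properties then follow as follows. \emph{Elementary indivisibility of each $\cA_i$}: this is exactly Theorem \ref{elemNontransitive}, since $\cM$ is transitive elementarily indivisible and $\langle\cN_0,\cN_1\rangle$ is an elementarily indivisible pair, independent of which co-infinite set $M_1^{(i)}$ we pick. \emph{Mutual elementary embeddability}: Theorem \ref{elemNontransitive} (via Theorem \ref{compositionQE}) gives $\cA_i \sim_e \scomp{\cM}{\cN_1}^s$ for every $i$, and since $\sim_e$ is an equivalence relation (as noted in Lemma \ref{elemEquivResult}'s proof) we get $\cA_i\sim_e\cA_j$ for all $i,j\in\omega$. \emph{Pairwise non-isomorphism}: here I would count, for each $\cA_i$, the number of elements whose $s$-equivalence class, viewed as an $\cL$-structure via the last remark before Definition \ref{def:transitiveTheory}, is isomorphic to $\cN_0$ rather than to $\cN_1$; by Lemma \ref{automorphisInduces} an automorphism of $\cA_i$ must send $\cN_0$-fibers to $\cN_0$-fibers and $\cN_1$-fibers to $\cN_1$-fibers, and more to the point an isomorphism $\cA_i \cong \cA_j$ would induce a bijection $M \to M$ carrying $M\setminus M_1^{(i)}$ onto $M\setminus M_1^{(j)}$ and respecting the induced structure on the index set, so if $|M\setminus M_1^{(i)}| = i$ these sets have different finite cardinalities for $i\neq j$, giving a contradiction.

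The main obstacle I expect is the last point — making ``the isomorphism type of $\cA_i$ depends on $i$'' precise and genuinely robust. Using a co-\emph{finite} index set with complement of size $i$ is clean because any isomorphism $\cA_i\to\cA_j$ restricts to a bijection of universes that must match up the (definable, via the failure of the completeness axiom in $\cN_1$ versus its validity in $\cN_0$) sets of ``$\cN_0$-points,'' and a finite set maps onto a finite set of the same size; so the cardinalities $i\neq j$ obstruct the isomorphism. One must double-check that the $\cN_0$-fibers and $\cN_1$-fibers really are distinguishable inside the product — this is where we use $\cN_0\not\cong\cN_1$ together with the fact that $\{a\}\times\cN_a$ is recovered up to isomorphism as an $s$-class (the remark following Definition \ref{defGenProduct}), so an isomorphism of products carries $s$-classes to $s$-classes of the same isomorphism type. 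Once that bookkeeping is in place the lemma follows; everything else is a direct invocation of Theorems \ref{compositionQE} and \ref{elemNontransitive} and Lemmas \ref{elemEquivResult} and \ref{automorphisInduces}.
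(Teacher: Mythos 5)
Your proposal is correct and follows essentially the same route as the paper: both build the $\cA_i$ as generalized products $\cM[\cN^{(i)}_a]^s_{a\in M}$ over a transitive elementarily indivisible base with fibers from the elementarily indivisible pair $\langle\cN_0,\cN_1\rangle$, get $\cA_i\sim_e\scomp{\cM}{\cN_1}^s$ from Theorem \ref{compositionQE} and conclude elementary indivisibility via Theorem \ref{compElemInd} and Lemma \ref{elemEquivResult}, and obtain pairwise non-isomorphism because an isomorphism of products carries $s$-classes to $s$-classes of the same fiber type. Your choice of index sets with complements of distinct finite sizes is exactly the simplification the paper itself notes (taking $\cM_{i+1}$ co-finite in $\cM_i$), so the two arguments coincide in substance.
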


\begin{proof}
	Let $\cM$ be a transitive elementarily indivisible structure and $\langle \cN_0, \cN_1\rangle$ an elementarily indivisible pair. Without loss of generality, they all admit QE. Let $\cM\supseteq \cM_0 \supsetneq \cM_1 \supsetneq \cM_2 \supsetneq \dots$ be an infinite descending chain of substructures satisfying the following:
	\begin{itemize}
	\item $\cM$ can be embedded into $\cM_i$ for every $i\in\omega$.
	\item For every $0\leq i<j\leq \omega$, either $\cM_i \not\cong \cM_j$ or $\cM\setminus \cM_i \not\cong \cM\setminus \cM_j$.
	\end{itemize}
	By induction and indivisibility of $\cM$, given $\cM_i$, there are many appropriate choices for $\cM_{i+1}$ (though there is no harm in assuming $\cM_0=\cM_1$ and $\cM_{i+1}$ is just a co-finite substructure of $\cM_i$).
	
	For every $i\in \omega$ and $a\in M$, denote
	\[\cN_a^i:=\twopartdef{\cN_1}{a\in \cM_i}{\cN_0}{a\notin \cM_i}\]
	and let $\cA_i:= \cM[\cN_a^i]_{a\in M}^s$.
	Clearly $\scomp{\cM}{\cN_1}^s \supseteq \cA_0 \supset \cA_1 \supset \cA_2\supset ...$ and they are pairwise-non-isomorphic. Since $\cM$ can be embedded into each $\cM_i$, and each $\cA_i$ embeds $\cM_i[\cN_1]^s$, it follows that $\cM[\cN_1]^s$ can be embedded into each $\cA_i$. Each $\cA_i$ can be embedded into $\scomp{\cM}{\cN_1}^s$ via the inclusion map.  By Theorem \ref{compositionQE} these embeddings are elementary, so  $\cA_i\sim_e \scomp{\cM}{\cN_1}^s$ for every $i\in \omega$. By Theorem \ref{compElemInd} the latter is elementarily indivisible and thus by Lemma \ref{elemEquivResult} so are all $\cA_i$. If we choose $\langle \cN_0,\cN_1\rangle$ to be in a finite language, then $\{\cA_i\}_{i\in\omega}$ will also be in a finite language. 
\end{proof}

\begin{theorem}\label{infOrb}
	Let $\{A_i\}_{i\in \omega}$ be as in Lemma \ref{infChain} and let $\cM$ be an elementarily indivisible structure. If $\{\cN_a\}_{a\in \cM}$ is a collection of structures satisfying
	\[\Set{\cN_a | a\in \cM} = \Set{\cA_i | i\in \omega}\] (setwise),  then $\gcompMN^s$ is elementarily indivisible and has infinitely many orbits. In particular, by Lemma \ref{infChain}, there is such a structure in a finite language.
\end{theorem}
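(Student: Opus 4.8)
The plan is to mimic the argument of Theorem~\ref{elemNontransitive}, replacing the two-valued split $\{\cN_0,\cN_1\}$ by the infinite family $\{\cA_i\}_{i\in\omega}$. The key point is that by Lemma~\ref{infChain} all $\cA_i$ are pairwise elementarily bi-embeddable, i.e. $\cA_i\sim_e\cA_j$ for all $i,j$, so intuitively ``which $\cA_i$ sits over which point of $\cM$'' is invisible to first-order logic; only the $s$-classes and the $\cM$-structure should matter. First I would fix some $\cA:=\cA_0$ (any fixed member of the family) and compare $\gcompMN^s$ with the ordinary lexicographic product $\cM[\cA]^s$. By Theorem~\ref{compElemInd} (applied to the elementarily indivisible structures $\cM$ and $\cA$), $\cM[\cA]^s$ is elementarily indivisible, so by Lemma~\ref{elemEquivResult} it suffices to show $\gcompMN^s\sim_e\cM[\cA]^s$.

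For the bi-embeddability I would argue exactly as in Theorem~\ref{elemNontransitive}: since each $\cN_a$ is some $\cA_i$ and $\cA_i$ elementarily embeds into $\cA$ (indeed $\cA_i\sim_e\cA$), one gets an embedding $e_2\colon\gcompMN^s\into\cM[\cA]^s$ by embedding each $s$-class $\{a\}\times\cN_a$ into $\{a\}\times\cA$; conversely, since $\cA=\cA_0$ elementarily embeds into each $\cN_a=\cA_i$, one can embed $\cM[\cA]^s$ into $\gcompMN^s$ by using, on each fibre, the embedding of $\cA$ into $\cN_a$. (If one wishes a cleaner source structure one may instead take $\cA:=\cM[\cN_1]^s$ from Lemma~\ref{infChain}, into which every $\cA_i$ elementarily embeds and which embeds into every $\cA_i$; then one compares $\gcompMN^s$ with $\cM[\cA]^s$ the same way.) To upgrade these embeddings to \emph{elementary} embeddings, I would invoke Theorem~\ref{compositionQE}: after Morleyizing — so that $\widehat\cM$ and each $\widehat{\cA_i}$ admit QE and $\Th(\widehat\cM)$ is transitive by Lemma~\ref{elemTrans} — Theorem~\ref{compositionQE} supplies a single $\widehat\cL\cup\{s\}$-theory $T$ with QE modelled by $\widehat\cM[\widehat{\cN_a}]^s_{a\in M}$ and by $\widehat\cM[\widehat\cA]^s$; QE of $T$ makes every embedding between models of $T$ elementary, so $e_1$ and $e_2$ (taken in the Morleyized language, then restricted) are elementary, giving $\gcompMN^s\sim_e\cM[\cA]^s$ and hence elementary indivisibility. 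As in the proof of Theorem~\ref{elemNontransitive} one first reduces to the case where $\cM$ and all the $\cA_i$ already admit QE, so that $\gcompMN^s$ is obtained as a reduct of the Morleyized product.

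For the ``infinitely many orbits'' clause I would use Lemma~\ref{automorphisInduces}: if $(a_1,b_1)$ and $(a_2,b_2)$ lie in the same $\aut(\gcompMN^s)$-orbit then $\cN_{a_1}\cong\cN_{a_2}$. Since $\{\cN_a\mid a\in M\}=\{\cA_i\mid i\in\omega\}$ and the $\cA_i$ are pairwise non-isomorphic (Lemma~\ref{infChain}), picking for each $i$ a point $a(i)\in M$ with $\cN_{a(i)}=\cA_i$ and any $b\in N_{a(i)}$ yields infinitely many pairwise distinct orbits. Finally, ``there is such a structure'' follows by instantiating: take $\cM$ to be any transitive elementarily indivisible structure (e.g.\ the random graph) and choose the index function $a\mapsto\cN_a$ so that every $\cA_i$ is hit — e.g.\ partition $M$ into infinitely many infinite pieces and assign $\cA_i$ to the $i$-th piece.

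The main obstacle I expect is the bookkeeping around the Morleyization in the elementary-embedding step: $\widehat{\cM[\cN_a]^s_{a\in M}}$ is \emph{not} $\widehat\cM[\widehat{\cN_a}]^s_{a\in M}$ (this is flagged in the paper right after Theorem~\ref{compElemInd}), so one must be careful to phrase everything in terms of the theory $T$ produced by Theorem~\ref{compositionQE} living in $\widehat\cL\cup\{s\}$, verify that both $\gcompMN^s$ and $\cM[\cA]^s$ (after Morleyizing their fibres) are models of that one theory, conclude elementarity of the two embeddings there, and only then restrict back to $\cL\cup\{s\}$ and pass to reducts. Everything else — the construction of the two embeddings, the orbit count — is a routine adaptation of the two-orbit case.
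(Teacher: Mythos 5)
Your proposal is correct and follows essentially the same route as the paper: compare $\gcompMN^s$ with $\scomp{\cM}{\cA_0}^s$ via fibrewise embeddings in both directions, upgrade them to elementary embeddings through the QE theory $T$ of Theorem \ref{compositionQE} (after reducing, as the paper does, to the case where $\cM$ and the $\cA_i$ admit QE), conclude elementary indivisibility from Theorem \ref{compElemInd} and Lemma \ref{elemEquivResult}, and count orbits via Lemma \ref{automorphisInduces} and the pairwise non-isomorphism of the $\cA_i$. The extra care you take with the Morleyization bookkeeping is exactly what the paper compresses into its ``without loss of generality'' step.
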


\begin{proof}
	Without loss of generality, $T_1:=\Th(\cM)$  admits QE and there is an $\cL$-theory $T_2$, admitting QE, such that  $\cN_a \models T_2$ for all $a\in \cM$. Let $T$ be as guaranteed by Theorem \ref{compositionQE}. 
	So $\scomp{\cM}{\cN}^s, \gcompMN^s \models T$ and obviously $\scomp{\cM}{\cA_0}^s$ can be embedded into $\gcompMN^s$ and vice versa. By QE of $T$, these embeddings are elementary, so $\scomp{\cM}{\cA_0}^s \sim_e \gcompMN^s$.
	By Theorem \ref{compElemInd}, $\scomp{\cM}{\cA_0}^s$ is elementarily indivisible and thus by Lemma \ref{elemEquivResult} so is $\gcompMN^s$.
	
	Now, since, by choice of $\{\cN_a\}_{a\in M}$, there are infinitely many pairwise non-isomorphic $\cN_a$s, by Lemma \ref{automorphisInduces}, $\gcompMN^s$ has infinitely many orbits.
\end{proof}

\section{An elementarily non-symmetrically indivisible structure}\label{elemNonSym}

In this section we will provide an negative answer to Question \ref{q:elemSym}.

But first, we provide a simpler construction of an indivisible structure that is not symmetrically indivisible. This construction is given to provide the reader with intuition for the continuation of this section and will be generalized in Proposition \ref{compNotSym}. The quick reader may skip the following example.

\begin{example}\label{simpleNonSym}
	Let $\cL = \{<\}$, let $\omega$ be the $\cL$-structure of ordered natural numbers and  let $X$ a pure countably infinite set (letting $<^X = \emptyset $). Then $\scomp{X}{\omega}$ is indivisible but not symmetrically indivisible.
\end{example}

\begin{proof}
	$\scomp{X}{\omega}$ is indivisible by Proposition \ref{compositionIndivisible}. As for symmetric indivisibility -- let $\{x_i\}_{i\in \omega}$ be an enumeration of $X$ and $c:\scomp{X}{\omega}\to \{\text{red},\text{blue} \}$ be the coloring defined as follows:
	\[ c\left(\left(x_i, j\right)\right) := \twopartdef{\text{red}}{j\leq i}{\text{blue}}{j>i.}  \]
	Every monochromatic red substructure will have only finite $<$-chains, and thus not isomorphic to $\scomp{X}{\omega}$. It is left to show that there is no monochromatic blue symmetrically embedded substructure isomorphic to $\scomp{X}{\omega}$. Assume towards contradiction $\cB$ is such a structure and let $\left(x_{i_0}, j_0\right)\in \cB$. Since $\cB \cong \scomp{X}{\omega}^s$, $\cB$ has infinitely many infinite $<$-chains and every chain is of the form $\cB\cap \left(\{x_i\}\times \omega\right)$. So let $i_1>j_0$ be such that $\cB \cap \left(\{x_{i_1}\}\times \omega\right) \neq \emptyset$. Let $\sigma\in \aut(\cB)$ be such that
	$ \sigma[\cB \cap \left(\{x_{i_0}\}\times \omega\right)] = \cB \cap \left(\{x_{i_1}\}\times \omega\right) $ and let $\left(x_{i_1},j_1\right):= \sigma\left(\left(x_{i_0},j_0\right)\right)$. Since $\left(x_{i_1},j_1\right)\in \cB$ and $\cB$ is all-blue, $j_1>i_1>j_0$. Since $\cB$ is symmetrically embedded, there is an automorphism $\widetilde{\sigma}\in \aut\left(\scomp{X}{\omega}^s\right)$ extending $\sigma$. Define $\tau \in \aut\left(\scomp{X}{\omega}^s\right)$ as follows:
	\[ \tau\left(\left(x_i,j\right)\right):=\threepartdef{\left(x_{i_1},j\right)}{i = i_0}  {\left(x_{i_0},j\right)}{i = i_1}  {\left(x_{i},j\right)}{i \neq i_0, i_1.} \]
	Namely, $\tau$ is the automorphism swapping $\{x_{i_0}\}\times\omega$ and $\{x_{i_1}\}\times\omega$.
	
	Now $\tau \circ \widetilde{\sigma} [\{x_{i_0}\}\times\omega] = \{x_{i_0}\}\times\omega$, so $\tau\circ \widetilde{\sigma} \upharpoonright \left(\{x_{i_0}\}\times\omega\right)$ is an automorphism of $\{x_{i_0}\}\times\omega$ and $ \tau\circ \widetilde{\sigma}\left(\left(x_{i_0},j_0\right)\right) = \left(x_{i_0},j_1\right)$. This is a non-trivial automorphism of $\{x_{i_0}\}\times \omega$, but $\left(\{x_{i_0}\}\times \omega\right) \cong \omega$ is rigid.
\end{proof}

\begin{lemma}\label{automorphism}
	Let $\cM, \cN$ be $\cL$-structures and let $\sigma \in \aut(\cM)$. If $\widetilde{\sigma}:\scomp{\cM}{\cN}^s \to \scomp{\cM}{\cN}^s$ is defined by
	$\widetilde{\sigma} \left(\left(a,b\right)\right) = \left(\sigma(a),b\right)$, then $\widetilde{\sigma}$ is an automorphism.
	
	In particular, if $\cM$ is transitive and $A, B\subset \scomp{\cM}{\cN}^s$ are $s$-equivalence classes, then there is an automorphism $\tau \in \aut\left(\scomp{\cM}{\cN}^s\right)$ such that $\tau[A] = B$.
\end{lemma}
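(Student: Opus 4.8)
The plan is to verify directly that $\widetilde{\sigma}$ is an automorphism of $\scomp{\cM}{\cN}^s$, and then deduce the ``in particular'' clause by exhibiting, for any two $s$-classes $A$ and $B$, an $s$-class--swapping automorphism of the desired form.

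First I would check that $\widetilde{\sigma}$ is a bijection of $M\times N$: since $\sigma$ is a bijection of $M$, the map $(a,b)\mapsto(\sigma(a),b)$ is a bijection with inverse $(a,b)\mapsto(\sigma^{-1}(a),b)$. Next I would show $\widetilde{\sigma}$ preserves every relation, treating the relations of $\cL$ and the relation $s$ separately. For an $n$-ary $R\in\cL$ and a tuple $\widebar{(a,b)}=((a_1,b_1),\dots,(a_n,b_n))$, recall from Definition \ref{defDefProduct} that membership of $\widebar{(a,b)}$ in $R^{\scomp{\cM}{\cN}^s}$ is governed by two cases according to whether all the $a_i$ coincide: if they all coincide it is determined by $\cN\models R(b_1,\dots,b_n)$, and otherwise by $\cM\models R(a_1,\dots,a_n)$. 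Applying $\widetilde{\sigma}$ replaces $a_i$ by $\sigma(a_i)$ and leaves the $b_i$ fixed; since $\sigma$ is injective, the $\sigma(a_i)$ all coincide iff the $a_i$ do, so we land in the same case, and in the first case the $\cN$-condition is literally unchanged while in the second the $\cM$-condition is preserved because $\sigma\in\aut(\cM)$. Hence $\widebar{(a,b)}\in R^{\scomp{\cM}{\cN}^s}$ iff $\widetilde{\sigma}(\widebar{(a,b)})\in R^{\scomp{\cM}{\cN}^s}$. For $s$, since $s$ holds of $((a_1,b_1),(a_2,b_2))$ iff $a_1=a_2$ iff $\sigma(a_1)=\sigma(a_2)$ (again using injectivity of $\sigma$), $s$ is preserved as well. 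Thus $\widetilde{\sigma}$ and its inverse are embeddings, so $\widetilde{\sigma}\in\aut(\scomp{\cM}{\cN}^s)$.

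For the ``in particular'' part, note that the $s$-equivalence classes of $\scomp{\cM}{\cN}^s$ are exactly the sets $\{a\}\times N$ for $a\in M$. So given $s$-classes $A=\{a\}\times N$ and $B=\{a'\}\times N$, transitivity of $\cM$ gives some $\sigma\in\aut(\cM)$ with $\sigma(a)=a'$; then $\tau:=\widetilde{\sigma}$ is an automorphism of $\scomp{\cM}{\cN}^s$ with $\tau[A]=\{\sigma(a)\}\times N=\{a'\}\times N=B$.

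There is no real obstacle here; the only points requiring a moment's care are the observation that $\sigma$ being a \emph{bijection} of $M$ is what guarantees the ``all $a_i$ equal / not all equal'' dichotomy is preserved, and the identification of the $s$-classes as the fibres $\{a\}\times N$. Both are immediate from the definitions, so this is a routine verification.
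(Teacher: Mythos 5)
Your proposal is correct and follows essentially the same route as the paper: a direct verification that $\widetilde{\sigma}$ is a bijection preserving $s$ and each $R\in\cL$ via the two cases of Definition \ref{defDefProduct} (the paper economizes slightly by noting $\widetilde{\sigma^{-1}}=(\widetilde{\sigma})^{-1}$ so that checking homomorphism suffices, while you check the biconditional directly), followed by the immediate deduction of the ``in particular'' clause from transitivity of $\cM$ and the identification of $s$-classes with the fibres $\{a\}\times N$.
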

\begin{proof}
	Clearly $\widetilde{\sigma}$ is a bijection. Notice that $\widetilde{\sigma^{-1}} = \left(\widetilde{\sigma}\right)^{-1}$, and since $\sigma$ is arbitrary, proving that $\widetilde{\sigma}$ is a homomorphism will suffice. It is clear that $\widetilde{\sigma}$ preserves $s$
	
	Let $R \in \cL$ be an $n$-ary relation, $\widebar{(a,b)}:=\left(\left(a_1,b_1\right),\dots,\left(a_n,b_n\right)\right)\in \scomp{\cM}{\cN}^s$ and assume
	$\scomp{\cM}{\cN}^s\models R\left(\widebar{(a,b)}\right)$.
	From the definition of $\scomp{\cM}{\cN}^s$, one of the following holds:
	\begin{itemize}
		\item $\bigvee_{1\leq j,k\leq n} {a_j\neq a_k}$ and $\cM\models R\left(a_1,\dots, a_n\right)$, so since $\sigma$ is an automorphism,
		$\bigvee_{1\leq j,k\leq n} {\sigma\left(a_j\right)\neq \sigma\left(a_k\right)}$ and $\cM\models R\left(\sigma\left(a_1\right),\dots, \sigma\left(a_n\right)\right)$.
		
		\item $\bigwedge_{1\leq j,k\leq n} {a_j = a_k}$ and $\cN\models R\left(b_1,\dots, b_n\right)$, so 
		$\bigwedge_{1\leq j,k\leq n} {\sigma\left(a_j\right) =  \sigma\left(a_k\right)}$ and $\cN\models R\left(b_1,\dots, b_n\right)$.
	\end{itemize}
	In any case, \[\scomp{\cM}{\cN}^s\models R\left(\widetilde{\sigma}\left(\left(a_1,b_1\right)\right),\dots, \widetilde{\sigma}\left(\left(a_1,b_1\right)\right)\right) .\]
\end{proof}
\begin{proposition}\label{compNotSym}
	If $\cM$ is a transitive structure and $\cN$ is a structure with infinitely many orbits such that $\cN$ can not be embedded into any finite union of orbits, then $\scomp{\cM}{\cN}^s$ is not symmetrically indivisible.
\end{proposition}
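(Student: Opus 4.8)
The plan is to generalize the colouring of Example~\ref{simpleNonSym}, with the role of the well-order of $\omega$ taken over by an enumeration of the orbits of $\cN$. Assume $M$ is infinite (the relevant case), fix an injective enumeration $M=\{a_i\}_{i\in\omega}$, and fix an enumeration $\{O_k\}_{k\in\omega}$ of the orbits of $\cN$ (for simplicity I assume $\cN$ has countably many orbits; in general one replaces the initial segments below by suitable finite sets of orbits). Define a colouring $c$ of $\scomp{\cM}{\cN}^s$ by
\[ c(a_i,b):=\twopartdef{\text{red}}{b\in O_k\text{ for some }k\le i}{\text{blue}}{b\in O_k\text{ for some }k>i,} \]
so that the red part of the $s$-class $\{a_i\}\times N$ is the finite union of orbits $\bigcup_{k\le i}\big(\{a_i\}\times O_k\big)$, and the blue part is the (nonempty) complement. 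I claim $c$ admits no monochromatic symmetrically embedded copy of $\scomp{\cM}{\cN}^s$. \emph{The red colour is handled without symmetry:} if $\cB\subseteq\scomp{\cM}{\cN}^s$ is all-red with $\cB\cong\scomp{\cM}{\cN}^s$, pick any $s$-class $C$ of $\cB$; it is isomorphic to $\cN$, it is contained in a single $s$-class $\{a_i\}\times N$, and being all-red it is contained in $\bigcup_{k\le i}\big(\{a_i\}\times O_k\big)$, so by Lemma~\ref{MNiffN} the map $(a_i,b)\mapsto b$ embeds $C\cong\cN$ into the finite union of orbits $\bigcup_{k\le i}O_k$ --- contradicting the hypothesis on $\cN$.

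\emph{The blue colour is the heart of the proof.} Suppose $\cB\subseteq\scomp{\cM}{\cN}^s$ is all-blue, symmetrically embedded, and $\cB\cong\scomp{\cM}{\cN}^s$. Let $S:=\{a\in M:\cB\cap(\{a\}\times N)\neq\emptyset\}$. The $s$-classes of $\cB$ are in bijection with $S$, so, since $\cB\cong\scomp{\cM}{\cN}^s$, we get $|S|=|M|$, which is infinite. Fix any $p=(a_{i_0},b)\in\cB$; as $p$ is blue, $b\in O_{k_0}$ for some $k_0>i_0$. Since $\{a_i:i\ge k_0\}$ is cofinite in $M$ while $S$ is infinite, there is $i_1\ge k_0$ with $a_{i_1}\in S$. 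By transitivity of $\cM$ and Lemma~\ref{automorphism}, $\scomp{\cM}{\cN}^s$ has an automorphism carrying $\{a_{i_0}\}\times N$ onto $\{a_{i_1}\}\times N$; transporting it along an isomorphism $\cB\cong\scomp{\cM}{\cN}^s$ produces $\sigma\in\aut(\cB)$ with $\sigma[\cB\cap(\{a_{i_0}\}\times N)]=\cB\cap(\{a_{i_1}\}\times N)$. Write $\sigma(p)=(a_{i_1},b')$; since $\sigma(p)\in\cB$ is blue, $b'\in O_{k'}$ with $k'>i_1\ge k_0$, hence $k'\neq k_0$, so $b$ and $b'$ lie in distinct orbits of $\cN$.

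Now extend $\sigma$ to $\widetilde\sigma\in\aut(\scomp{\cM}{\cN}^s)$, using that $\cB$ is symmetrically embedded; as automorphisms permute $s$-classes, $\widetilde\sigma[\{a_{i_0}\}\times N]=\{a_{i_1}\}\times N$. Pick $\rho\in\aut(\cM)$ with $\rho(a_{i_1})=a_{i_0}$, and let $\widetilde\rho(x,y):=(\rho(x),y)$, which is an automorphism of $\scomp{\cM}{\cN}^s$ by Lemma~\ref{automorphism}. Then $\eta:=\widetilde\rho\circ\widetilde\sigma$ is an automorphism of $\scomp{\cM}{\cN}^s$ fixing $\{a_{i_0}\}\times N$ setwise, with $\eta(a_{i_0},b)=\widetilde\rho(a_{i_1},b')=(a_{i_0},b')$. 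Hence $\eta$ restricts to an automorphism of the induced substructure on $\{a_{i_0}\}\times N$, which is $\cL$-isomorphic to $\cN$ by Lemma~\ref{MNiffN}, and this automorphism sends $b$ to $b'$ --- impossible, since $b$ and $b'$ are in distinct $\aut(\cN)$-orbits. This contradiction completes the proof.

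\emph{Expected main obstacle.} The conceptual crux is the last paragraph: manufacturing an automorphism of a \emph{single} copy of $\cN$ from an automorphism of the coloured copy $\cB$ together with the index-shifting automorphism $\widetilde\rho$ provided by transitivity of $\cM$ (via Lemma~\ref{automorphism}), and then reading off a contradiction from the fact that $\cN$ cannot carry an element from one orbit into another. The only genuinely delicate ingredient is the bookkeeping behind the colouring: it must guarantee that the orbit $O_{k_0}$ of the chosen point is coloured red in \emph{some} $s$-class that $\cB$ meets. The nested choice $a_i\mapsto\{O_0,\dots,O_i\}$ achieves this when $M$ and the orbit set are countable; for the uncountable case one wants an assignment $a\mapsto F_a$ to finite sets of orbits in which each single orbit is omitted from $F_a$ for only a ``small'' set of indices $a$, and it is here that the uncountable case (alluded to in the introduction) requires extra care. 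Finally, note that the hypothesis that $\cN$ does not embed into any finite union of orbits is used exactly once --- to make the red colour unusable.
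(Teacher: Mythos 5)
Your proposal is correct and follows essentially the same route as the paper's proof: the identical colouring by comparing the orbit index of the second coordinate with the index of the first, the same dismissal of the red colour via the no-embedding-into-finitely-many-orbits hypothesis, and the same blue argument combining an $s$-class-moving automorphism of the copy $\cB$, its extension by symmetric embeddedness, and the index-correcting automorphism $\widetilde\rho$ from Lemma \ref{automorphism} to manufacture an automorphism of $\cN$ moving a point between distinct orbits. The only (harmless) differences are bookkeeping: your explicit set $S$ and the choice $i_1\ge k_0$ play the role of the paper's choice $i_1>j_0$, and your countability caveat matches the paper's implicit enumeration of $M$ and the orbits by $\omega$.
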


\begin{proof}
	We generalize the proof of Example \ref{simpleNonSym}:
	let $\{a_i\}_{i\in \omega}$ be an enumeration of $M$ and  $\{O_i\}_{i\in \omega}$ an enumeration of the orbits of $\cN$. For $b\in \cN$, denote $on(b) = j$ if $b\in O_j$ and define a coloring $c:\scomp{\cM}{\cN}^s \to \{\text{red},\text{blue} \}$ as follows:
	\[ c\left(a_i, b\right) := \twopartdef{\text{red}}{on(b)\leq i}{\text{blue}}{on(b)>i.}  \]
	For every all-red substructure, every $s$-equivalence class will be embedded in a finite union of orbits, and thus not isomorphic $\cN$. It is left to show that there is no all-blue symmetrically embedded substructure isomorphic to $\scomp{\cM}{\cN}$. Assume towards contradiction $\cB$ is such a structure and let $\left(a_{i_0}, b\right)\in \cB$. Denote $j_0:=on(b)$. Since $\cB \cong \scomp{\cM}{\cN}^s$, $\cB$ has infinitely many infinite $s$-equivalence classes and every $s$-equivalence class of $\cB$ is of the form $\cB \cap \left(\{a\}\times \cN\right)$ for some $a\in M$. Let $i_1>j_0$ such that $\cB \cap \left(\{a_{i_1}\}\times \cN\right) \neq \emptyset$. Since $\cM$ is transitive, by Lemma \ref{automorphism}, for every two $s$-equivalence classes $A, B\subset \scomp{\cM}{\cN}^s$, there is an automorphism $\tau \in \aut\left(\scomp{\cM}{\cN}^s\right)$ such that $\tau[A] = B$. Since $\cB\cong \scomp{\cM}{\cN}^s$, this is true for $\cB$ as well, so let $\tau\in \aut(\cB)$ be an automorphism such that 
	\[\tau[\cB \cap \left(\{a_{i_0}\}\times \cN\right)] = \cB \cap \left(\{a_{i_1}\}\times \cN\right) .\]
	Denote  $\left(a_{i_1},c\right):=\tau\left(\left(a_{i_0},b\right)\right)$. Since $\left(a_{i_1},c\right)$ is blue, $on(c)>i_1>j_0 = on(b)$.
	
	Since $\cB$ is symmetrically embedded, let $\widehat{\tau}\in \aut\left(\scomp{\cM}{\cN}^s\right)$ extending $\tau$. Let $\sigma\in\aut(\cM)$ such that $\sigma\left(a_{i_1}\right) = a_{i_0}$ and let $\widetilde{\sigma}\in \aut\left(\scomp{\cM}{\cN}^s\right)$ as defined in Lemma \ref{automorphism}. $\widetilde{\sigma}\circ\widehat{\tau}$ is an automorphism and $\widetilde{\sigma}\circ\widehat{\tau}[\{a_{i_0}\}\times \cN] = \{a_{i_0}\}\times \cN$, so \[ \theta:= \widetilde{\sigma}\circ\widehat{\tau} \upharpoonright \{a_{i_0}\}\times \cN \] is an automorphism of $\{a_{i_0}\}\times \cN$. Define $\iota_0:\cN \overset{\cong}{\rightarrow} \{a_{i_0}\}\times \cN$ by $\iota_0(b):=\left(a_{i_0},b\right)$. $\iota_0^{-1}\circ \theta \circ \iota$ is an automorphism of $\cN$ and $\iota_0^{-1}\circ \theta \circ \iota (b) = c$, but this contradicts $on(c)>on(b)$.
\end{proof}

\begin{theorem}\label{elemNonSymResult}
	There is an elementarily indivisible structure (in a finite language) that is not symmetrically indivisible.
\end{theorem}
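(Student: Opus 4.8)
The plan is to combine Theorem~\ref{infOrb}, Proposition~\ref{compNotSym} and Theorem~\ref{compElemInd}. By Proposition~\ref{compNotSym}, $\scomp{\cM}{\cN}^s$ fails to be symmetrically indivisible whenever $\cM$ is transitive, $\cN$ has infinitely many orbits, and $\cN$ embeds into no finite union of its orbits; and by Theorem~\ref{compElemInd}, $\scomp{\cM}{\cN}^s$ is elementarily indivisible whenever $\cM$ and $\cN$ both are. So it is enough to produce a single elementarily indivisible structure $\cN$ that has infinitely many orbits and cannot be embedded into any finite union of them, and then to form $\scomp{\cM}{\cN}^s$ for $\cM$ any transitive elementarily indivisible structure (for instance $\langle\mathbb{Q},<\rangle$ or the random graph).

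For $\cN$ I would take a sharpened instance of Theorem~\ref{infOrb}. Let $\{\cA_i\}_{i\in\omega}$ be as in Lemma~\ref{infChain}, let $\cM_0$ be a countably infinite elementarily indivisible structure, enumerate its universe as $M_0=\{a_n:n\in\omega\}$, and set $\cN_{a_n}:=\cA_n$; then $\{\cN_a:a\in M_0\}=\{\cA_i:i\in\omega\}$ and the fibres $\cN_{a_n}$ are pairwise non-isomorphic. Put $\cN:=\scomp{\cM_0}{\cN_a}_{a\in M_0}^s$; by Theorem~\ref{infOrb} it is elementarily indivisible and has infinitely many orbits.

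The point that still needs an argument is that $\cN$ embeds into no finite union of its orbits. Since the fibres are pairwise non-isomorphic, Lemma~\ref{automorphisInduces} forces every orbit of $\cN$ to lie inside a single $s$-class: an automorphism moving $(a_n,b)$ to $(a_m,b')$ yields $\cA_n\cong\cN_{a_n}\cong\cN_{a_m}\cong\cA_m$, hence $n=m$. Thus any finite union of orbits is contained in finitely many $s$-classes $C_1,\dots,C_r$ of $\cN$. On the other hand $\cN$ has infinitely many nonempty $s$-classes, one per element of $M_0$; and an embedding $f$ of $\cN$ into $C_1\cup\dots\cup C_r$ preserves $s$, so the $f$-image of each $s$-class of $\cN$ is a set of pairwise $s$-related elements and therefore sits inside a single $C_j$. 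Pigeonhole gives two distinct $s$-classes $D\neq D'$ of $\cN$ with $f[D],f[D']$ inside the same $C_j$; picking $d\in D$, $d'\in D'$ we get $\cN\models s(f(d),f(d'))$, hence $\cN\models s(d,d')$, contradicting $D\neq D'$. Granting this, the theorem follows: for $\cM$ transitive and elementarily indivisible, $\scomp{\cM}{\cN}^s$ is elementarily indivisible by Theorem~\ref{compElemInd} and not symmetrically indivisible by Proposition~\ref{compNotSym}.

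The main obstacle is exactly that last point: making $\cN$ not merely \emph{have} infinitely many orbits but be uncompressible into finitely many of them. The device is to take the fibres in Theorem~\ref{infOrb} pairwise non-isomorphic, so that Lemma~\ref{automorphisInduces} pins each orbit to a single $s$-class and the rigidity of $s$ (no embedding can merge two distinct $s$-classes) supplies the contradiction. One should also remember that Proposition~\ref{compNotSym} needs its outer factor to be \emph{transitive}, so $\cM$ must be a transitive elementarily indivisible structure, not merely one whose theory is transitive.
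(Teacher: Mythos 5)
Your proposal is correct and follows essentially the same route as the paper: build an elementarily indivisible structure with infinitely many orbits via Theorem~\ref{infOrb} and Lemma~\ref{automorphisInduces}, then feed it as the inner factor of $\scomp{\cB}{\cA}^s$ with a transitive elementarily indivisible outer factor, combining Theorem~\ref{compElemInd} with Proposition~\ref{compNotSym}. The only (harmless) difference is that you take each $\cA_i$ as a fibre exactly once, whereas the paper only requires each to occur finitely often, and you spell out the pigeonhole argument (finitely many orbits lie in finitely many $s$-classes, which cannot absorb an $s$-preserving embedding of a structure with infinitely many $s$-classes) that the paper leaves implicit.
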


\begin{proof}
	Let $\cA:=\gcompMN^s$ as in Theorem \ref{infOrb} and let $\cB$ be any elementarily indivisible transitive structure (in a finite language). If we choose $\{\cN_a\}_{a\in M}$ such that $\Set{a\in M | \cA_i = \cN_a}$ is finite for every $i\in \omega$, then by Lemma \ref{automorphisInduces} every orbit of $\cA$ has only finitely many $s$-equivalence classes and $\cA$ can not be embedded into only finitely many orbits.  By Theorem \ref{compElemInd}, $\scomp{\cB}{\cA}^s$ is elementarily indivisible, but by Proposition \ref{compNotSym}, it is not symmetrically indivisible.
\end{proof}

\subsection*{Acknowledgements}
	The work in this paper is part of the author's M.Sc. thesis, prepared under the supervision of Assaf Hasson. The author would like to gratefully acknowledge him for presenting the questions discussed in the paper, as well as for fruitful discussions and the great help and support along the way. The author was partially supported by an Israel Science Foundation grant number 1156/10.

\bibliographystyle{alpha}
\bibliography{thesisref}

\end{document}